\newtheorem{theorem}{Theorem}[section]
\newtheorem{lemma}[theorem]{Lemma}
\newtheorem{proposition}[theorem]{Proposition}
\theoremstyle{definition}
\newtheorem{remark}[theorem]{Remark}
\numberwithin{equation}{section}
\begin{document}

%%%%% To ease editing, for IMPAN journals add:

\baselineskip=17pt

%%%%%%%%%%%

%% In the running head, replace first names by initials 
%% and give an abbreviation of the title.

\title[Estimation of Szlenk index]{Estimation of the Szlenk index \\ of reflexive Banach spaces using \\ Generalized Baernstein spaces}

\author[R. Causey]{Ryan Causey}
\address{Ryan Causey \\ Department of Mathematics \\ Texas A\&M University \\
College Station, TX 77845}
\email{rcausey@math.tamu.edu}

\date{}

\begin{abstract}For each ordinal $\alpha<\omega_1$, we prove the existence of a separable, reflexive Banach space with a basis and Szlenk index $\omega^{\alpha+1}$ which is universal for the class of separable, reflexive Banach spaces with Szlenk index not exceeding $\omega^\alpha$.  

\end{abstract}

\subjclass[2010]{Primary 46B03; Secondary 46B28}

\keywords{Szlenk index, Universality, Embedding in spaces with finite dimensional decompositions, Baernstein spaces}

\maketitle

\section{Introduction}

The relatively new tool of weakly null trees has produced a number of recent results in Banach space theory.  In particular, trees have facilitated the solution of coordinitization questions through a strong connection between trees and embedding into Banach spaces with an FDD which has prescribed properties.  For example, Johnson and Zheng completely characterized when a separable reflexive space embeds into a reflexive space with unconditional basis \cite{JZh} and when a separable Banach space embeds into a Banach space with shrinking, unconditional basis \cite{JZh1} using the UTP and $w^*$UTP, respectively.  Odell and Schlumprecht demonstrated that for $1< p< \infty$, a separable, reflexive space embeds into a Banach space which is the $\ell_p$ sum of finite dimensional spaces if and only if every normalized, weakly null tree has a branch equivalent to the $\ell_p$ unit vector basis \cite{OS1}.  In a spirit which we continue, Odell and Schlumprecht established a strong connection between tree estimates and embeddings into Banach spaces with the corresponding block estimates (the relevant notions are defined in Section $2$).  These coordinitization results provide an avenue for the proof of the existence of universal Banach spaces for classes of spaces with certain tree estimates.  

Our results follow the methods of Odell, Schlumprecht, and Zs\'{a}k \cite{OSZ2}, and Freeman, Odell, Schlumprecht, and Zs\'{a}k \cite{FOSZ} who used Tsirelson spaces in their constructions.  In \cite{FOSZ}, the objects of study were Banach spaces with separable dual, while in \cite{OSZ2}, the objects were separable, reflexive spaces.  The former proved both a coordinitization result and a universality result concerning the classes of separable Banach spaces with Szlenk index not exceeding $\omega^{\alpha \omega}$, while the latter proved a coordinitization result and a universality result concerning the classes of separable, reflexive Banach spaces $X$ such that the Szlenk indices of both $X$ and $X^*$ do not exceed $\omega^{\alpha\omega}$.  In \cite{CA}, results analogous to those of \cite{FOSZ} were established using Schreier spaces.  These results allowed finer gradations by working instead with the classes of separable Banach spaces with Szlenk index not exceeding $\omega^\alpha$ for a countable ordinal $\alpha$.  Two-sided estimates were not possible with Schreier spaces, which are $c_0$-saturated.  To establish two-sided estimates, we introduce a generalization of the so-called Baernstein space, which we denote $X^p_\alpha$, which is itself a generalization of Schreier's original space.  The details of the construction are given in Section $3$.  This allows us to improve the results of \cite{OSZ2} by making finer gradations, as in \cite{CA}.  In Section $4$, we define the Szlenk index, originally used by Szlenk to prove the non-existence of a separable, reflexive Banach space which is universal for the class of separable, reflexive Banach spaces.  We also recall several results concerning the Szlenk index which connect it to tree estimates.  Last, we present the proofs of the main results in Section $5$.  Our connection between the Szlenk index and tree estimates is summarized in 

\begin{theorem} If $X$ is a separable, reflexive Banach space, $\alpha<\omega_1$ is such that $Sz(X), Sz(X^*)\leq \omega^\alpha$, and $1<p\leq 2$, then $X$ satisfies subsequential $((X_\alpha^p)^*, X_\alpha^p)$-tree estimates. \label{name1}\end{theorem}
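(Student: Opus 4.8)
The plan is to split the assertion into its upper and lower halves, to read the upper half off directly from the bound $Sz(X)\le\omega^\alpha$, and to obtain the lower half by applying the same upper-half result to $X^*$ and then dualizing. Both halves rest on a single ingredient, which is essentially the content of Sections 3 and 4: the one-sided transfer statement that if $Y$ is a Banach space with $Sz(Y)\le\omega^\alpha$ and $1<p\le 2$, then $Y$ satisfies subsequential $X_\alpha^p$-upper tree estimates, i.e.\ there is a constant $C=C(\alpha,p)$ such that every normalized weakly null tree in $Y$ has a branch $(y_i)$ that is $C$-dominated by some subsequence $(e_{k_i})$ of the basis of $X_\alpha^p$. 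Applying this with $Y=X$ gives the upper half at once: $X$ satisfies subsequential $X_\alpha^p$-upper tree estimates.

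For the lower half, apply the transfer statement with $Y=X^*$; this is legitimate because $Sz(X^*)\le\omega^\alpha$ is part of the hypothesis, and it yields that $X^*$ satisfies subsequential $X_\alpha^p$-upper tree estimates. I would then invoke the duality principle for subsequential tree estimates in the reflexive setting. Since $X$ is reflexive we identify $X=X^{**}$ and regard any normalized weakly null tree in $X$ as lying in $(X^*)^*$; given such a tree one builds, level by level, a dual weakly null tree of norm-one functionals in $X^*$ that almost norms the corresponding vectors along branches (Hahn--Banach together with weak compactness on the tree), applies the $X_\alpha^p$-upper estimate for $X^*$ to a branch of this dual tree, removes the off-diagonal errors by a further pruning, and reads off from the norming relation an $(X_\alpha^p)^*$-lower estimate along the matching branch in $X$. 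This step uses that, by the construction of Section 3, $X_\alpha^p$ is reflexive with a $1$-unconditional basis, so that $(X_\alpha^p)^*$ is the expected conjugate space whose basis consists of the biorthogonal functionals; and it is here that the restriction $1<p\le 2$ is used, since it secures a lower $\ell_p$-estimate for the basis of $X_\alpha^p$ with $n^{1/p}\ge n^{1/2}$, which forces the $(X_\alpha^p)^*$-norm to be dominated by the $X_\alpha^p$-norm and so makes the two one-sided estimates mutually compatible --- the feature that fails for the $c_0$-saturated Schreier spaces and for which $X_\alpha^p$ was introduced. Combining the two halves shows that $X$ satisfies subsequential $((X_\alpha^p)^*,X_\alpha^p)$-tree estimates.

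The substance, and the step I expect to be the main obstacle, is the transfer statement itself. The mechanism is that $Sz(Y)\le\omega^\alpha$ supplies, by iterating the Szlenk derivation and tracking the iteration along the Schreier family $\mathcal{S}_\alpha$, an estimate of roughly the following shape: after an $\varepsilon$-perturbation, for each $y^*\in B_{Y^*}$ the set of indices $i$ on which $|a_i\,y^*(y_i)|$ is non-negligible splits into a uniformly bounded number of blocks forming an $\mathcal{S}_\alpha$-admissible family, whence summing the $\ell_p$-contributions of these blocks reproduces exactly the defining expression for $\|\sum a_i e_{k_i}\|_{X_\alpha^p}$ and yields $\|\sum a_i y_i\|_Y\le C\|\sum a_i e_{k_i}\|_{X_\alpha^p}$. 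Two points require care: upgrading this from an estimate valid for finitely many vectors, or for one level of the tree at a time, to one valid uniformly along a whole branch, which is a Ramsey-type pruning and diagonalization on the tree; and carrying the exponent $p$ correctly through the Szlenk-derivation bookkeeping so that it meshes with the $\ell_p$ built into $X_\alpha^p$. Once the transfer statement is in hand the two reductions above are essentially formal.
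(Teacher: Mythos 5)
Your decomposition is exactly the one the paper uses: get subsequential $X_\alpha^p$-upper tree estimates for $X$ and for $X^*$ separately from the two Szlenk hypotheses, then convert the upper estimate on $X^*$ into an $(X_\alpha^p)^*$-lower tree estimate on $X$ by duality (in the paper this second step is simply a citation of \cite[Lemma 2.7]{FOSZ}, and your sketch of norming functionals plus pruning is the right mechanism for it). Where you diverge is in how the ``transfer statement'' is obtained, and this is also where your plan has a soft spot. The paper does not rederive anything from the Szlenk derivation: it quotes \cite[Theorem 1.1]{CA}, which says that $Sz(Y)\leq\omega^\alpha$ yields subsequential $X_\alpha$-upper tree estimates for the \emph{Schreier} space $X_\alpha$, and then observes that this trivially upgrades to $X_\alpha^p$-upper tree estimates because $\|\cdot\|_{X_\alpha}\leq\|\cdot\|_{X_\alpha^p}$ on $c_{00}$ (a $C$-domination by $(e_{k_i})$ in the smaller norm is a fortiori a $C$-domination in the larger one). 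By contrast, you propose to run the Szlenk-derivation bookkeeping so that it produces the $\ell_p$-sum over $\mathcal{S}_\alpha$-admissible blocks directly; you correctly identify this as the main obstacle, but it is an obstacle you have created for yourself. The natural output of iterating the $\varepsilon$-derivation along $\mathcal{S}_\alpha$ is an $\ell_1$-over-one-admissible-set bound, i.e.\ the Schreier norm, and there is no need to ``carry the exponent $p$ through the bookkeeping'' at all --- the $\ell_p$ enters only through the one-line norm comparison above. Relatedly, your explanation of why $1<p\leq 2$ is needed is misplaced for this theorem: the duality step requires only that $X_\alpha^p$ be reflexive with a $1$-unconditional basis (true for all $1<p<\infty$, Lemma \ref{Baernsteinprop}); the restriction $p\leq 2$ is exploited later, in Lemma \ref{upperest} and the embedding theorems, where one needs $\|\cdot\|_{p'}\leq\|\cdot\|_p$. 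So your argument is structurally sound and would close up once the transfer statement is in hand, but the efficient route to that statement is via the Schreier space result of \cite{CA} rather than a direct $\ell_p$-aware Szlenk analysis.
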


A major idea behind Theorem \ref{name1} is the comparison of normalized block sequences in two Banach spaces to make a comparison of the Szlenk indices of the two spaces.  To make this comparison, we establish the following coordinitization result, which connects tree estimates with block estimates.  

\begin{theorem} Let $1<p\leq 2$ and let $U$ be a reflexive Banach space with a normalized, $1$-unconditional, right dominant basis satisfying subsequential $U^p$-upper block estimates in $U^p$.  If $X$ is a separable, reflexive Banach space satisfying subsequential $((U^p)^*,U^p)$-tree estimates, then $X$ embeds into a Banach space $Z$ with FDD $E$ satisfying subsequential $((U^p)^*,U^p)$-block estimates in $Z$.  \end{theorem}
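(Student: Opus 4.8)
The strategy is the coordinatization scheme of Odell and Schlumprecht \cite{OS1}, in the refined (subsequential) form developed in \cite{OSZ2} and \cite{CA}: realize $X$ inside a space with a good FDD, upgrade the tree estimates on $X$ to \emph{skipped-block} estimates along a blocking of that FDD, and then build $Z$ by renorming that blocking. To begin, since $X$ is separable and reflexive it embeds isometrically into a reflexive space $W$ with a bimonotone FDD $F=(F_n)$ (by a theorem of Zippin); as $W$ is reflexive, $F$ is shrinking and boundedly complete. The one consequence of reflexivity that matters here is that every normalized block sequence of $X$ with respect to $F$, or to any blocking of $F$, whose supports tend to infinity is weakly null in $W$, hence weakly null in $X$.

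The heart of the argument is the second step: using the hypothesis that $X$ satisfies subsequential $((U^p)^*,U^p)$-tree estimates, I would produce a blocking $\tilde F=(\tilde F_n)$ of $F$, with $\tilde F_n=\bigoplus_{i\in I_n}F_i$ for successive intervals $I_n\subseteq\mathbb{N}$, together with a constant $C$, so that \emph{every} normalized sequence $(x_k)$ in $X$ that is a skipped-block sequence with respect to $\tilde F$ satisfies, with constant $C$, both the subsequential upper $U^p$-estimate and the subsequential lower $(U^p)^*$-estimate, the comparisons being made against the subsequences of the $U^p$-basis and the $(U^p)^*$-basis indexed by the $\tilde F$-blocks on which the $x_k$ live. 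The standard route to this is a contradiction/diagonalization argument: if no such blocking existed, one could build a normalized weakly null tree in $X$ -- weak nullity supplied by the first step -- all of whose branches violate the estimate with every constant, contradicting the hypothesis. Here $1$-unconditionality and right-dominance of the basis of $U$ are exactly the properties that let one pass from the genuine support indices of the $x_k$ in $F$ (which is literally what a tree estimate controls) to the coarser $\tilde F$-block indices at the cost of only a one-sided constant.

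For the third step I would let $Z$ be the completion of $c_{00}(\bigoplus_n \tilde F_n)$ under a norm built from $\|\cdot\|_W$ and an averaging of $\|\cdot\|_W$ against the $U^p$-basis over all partitions of $\mathbb{N}$ into successive intervals, schematically
\[
\|z\|_Z \;=\; \max\Bigl\{\;\|z\|_W\;,\;\; \sup\ \bigl\| \textstyle\sum_i \|P_{J_i}z\|_W\, u^p_{\min J_i}\bigr\|_{U^p}\;\Bigr\},
\]
where $P_J$ is the FDD-projection onto $\bigoplus_{n\in J}\tilde F_n$, and with $E$ the FDD $(\tilde F_{2n-1}\oplus\tilde F_{2n})_n$, chosen so that each $E$-block straddles only boundedly many $\tilde F$-blocks. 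The remaining verifications then follow the templates of \cite{OSZ2} and \cite{CA}: the norm is well defined because $U^p$ has subsequential upper $U^p$-block estimates in itself (the hypothesis on $U$); $\|z\|_Z=\|z\|_W$ for $z$ in a single $\tilde F_n$, hence $\|z\|_Z$ is comparable to $\|z\|_W$ on single $E$-blocks and $E$ is a bimonotone FDD of $Z$; $X$ embeds isomorphically into $Z$, the inclusion being bounded precisely because the skipped-block upper $U^p$-estimate of the second step controls the averaging term by $\|\cdot\|_W$ on $X$, and bounded below since $\|\cdot\|_Z\ge\|\cdot\|_W$; the upper $U^p$-block estimate for $E$ in $Z$ comes from the averaging term together with the self-similarity and right-dominance hypotheses; the lower $(U^p)^*$-block estimate is built in dually, using that the $(U^p)^*$-basis is dominated by the $U^p$-basis for $1<p\le 2$, again with right-dominance absorbing the index shifts; and $Z$ is reflexive because of these two block estimates and the reflexivity of $U^p$, which also makes $E$ shrinking and boundedly complete.

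The step I expect to be the genuine obstacle is the second one. Making the passage ``every weakly null tree has a good branch $\Rightarrow$ every skipped block of one fixed blocking is good'' rigorous requires the perturbation lemmas that turn block sequences into branches of honest weakly null trees, a Ramsey/diagonal stabilization to pin down a single blocking that works simultaneously for all skipped blocks, and careful bookkeeping of the subsequential indices so that the $U^p$- and $(U^p)^*$-comparisons land on the correct subsequences -- this last point being exactly what right-dominance of the $U$-basis is for. Everything in the first and third steps is routine by comparison, apart from the standard but not entirely painless checks that $E$ is a shrinking, boundedly complete FDD of $Z$ and that the two block estimates survive in $Z$ with explicit constants; if needed, one cleans these up by passing to a further blocking of $E$.
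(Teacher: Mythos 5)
Your steps 1 and 2 are consistent with the standard machinery (essentially Zippin's embedding plus the killing-the-overlap argument behind Theorem 12 of \cite{OSZ1}), but step 3 contains a genuine gap, and it is precisely the gap the structure of the paper's proof exists to avoid. A max-type renorming $\|z\|_Z=\max\bigl\{\|z\|_W,\ \sup_{(J_i)}\bigl\|\sum_i\|P_{J_i}z\|_W\, v_{\min J_i}\bigr\|_V\bigr\}$ can only \emph{add a lower} $V$-block estimate to the FDD; it cannot create an upper block estimate, since the new norm dominates $\|\cdot\|_W$ and the Zippin/Schechtman ambient space $W$ has no upper $U^p$-block estimates of any kind (it contains arbitrary bimonotone FDDs). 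So your $E$ will not satisfy subsequential $U^p$-upper block estimates no matter which $V$ you average against; the claim that the upper estimate ``comes from the averaging term'' has the direction of the inequality reversed. Relatedly, the inclusion $X\hookrightarrow Z$ is bounded exactly when the skipped blocks of $x\in X$ satisfy a \emph{lower} $V$-estimate, so you must take $V=(U^p)^*$ (the only lower estimate $X$ is assumed to have), not $V=U^p$ as written; with $V=U^p$ the embedding fails in general.

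The paper obtains the upper estimates by duality rather than by renorming: it passes to $X^*$, which has lower $(U^p)^*$-tree estimates, realizes $X^*$ as a quotient of a space $W$ whose FDD has lower $(U^p)^*$-block estimates (imposed by exactly the renorming you describe, but on the quotient side, via Theorem 12(b) of \cite{OSZ1}), and then dualizes so that $X$ embeds into $W^*$, whose FDD now carries \emph{upper} $U^p$-block estimates. Only afterwards does it impose the lower $(U^p)^*$-block estimates by a second renorming $(W^*)^{(U^p_C)^*}(H)$ on a suitable blocking $H$. This two-stage construction forces a compatibility check absent from your outline: one must verify that the upper $U^p$-block estimates of $H$ survive the second renorming. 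That is Lemma \ref{upperest}, and it is the one place the hypothesis $1<p\le 2$ is genuinely used (it yields $\|\cdot\|_{(U^p_B)^{(U^p_C)^{(*)}}}=\|\cdot\|_{U^p_B}$). To repair your argument you would need to replace step 3 by this dualize--renorm--dualize--renorm scheme, invoking both parts of Theorem 12 of \cite{OSZ1} together with the analogue of Lemma \ref{upperest}, and also handle the bookkeeping of missing basis indices by taking direct sums with $(U^p_{\mathbb{N}\setminus K})^*$ as in the paper.
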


We last employ a theorem of Johnson, Rosenthal, and Zippin from \cite{JRZ} to deduce the existence of a universal space with a basis.  For this, we define for each $\alpha<\omega_1$ the class $$\mathcal{C}_\alpha=\{X: X \text{\ separable, reflexive}, Sz(X), Sz(X^*)\leq \omega^\alpha\}.$$  

\begin{theorem} Let $\alpha<\omega_1$.  There exists a separable, reflexive space $W\in \mathcal{C}_{\alpha+1}$ with a basis which is universal for the class $\mathcal{C}_\alpha$.  \label{name3}\end{theorem}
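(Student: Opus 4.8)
The plan is to combine Theorem~\ref{name1}, the coordinatization theorem (Theorem~1.2), and a universal-space construction in the spirit of \cite{OSZ2} and \cite{FOSZ}, and then to upgrade the resulting FDD to a basis via \cite{JRZ}. First I would fix $1<p\leq 2$ and record, from Section~3, the properties of the generalized Baernstein space $X^p_\alpha$ that will be used: it is separable and reflexive, its unit vector basis is normalized and $1$-unconditional, it arises as $U^p$ for a reflexive space $U$ whose basis is normalized, $1$-unconditional, right dominant and satisfies subsequential $U^p$-upper block estimates in $U^p$ (so that $U$ meets the hypotheses of Theorem~1.2), and $Sz(X^p_\alpha)\leq\omega^{\alpha+1}$ and $Sz((X^p_\alpha)^*)\leq\omega^{\alpha+1}$.

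The first step is a reduction to spaces with an FDD. Let $X\in\mathcal C_\alpha$, so $X$ is separable, reflexive, and $Sz(X),Sz(X^*)\leq\omega^\alpha$. By Theorem~\ref{name1}, $X$ satisfies subsequential $((X^p_\alpha)^*,X^p_\alpha)$-tree estimates, and then Theorem~1.2 (with $U$ as above) embeds $X$ into a separable, reflexive space with an FDD satisfying subsequential $((X^p_\alpha)^*,X^p_\alpha)$-block estimates. So it is enough to produce a single separable, reflexive space $\widehat Z$ with an FDD such that (i) $\widehat Z\in\mathcal C_{\alpha+1}$, and (ii) every separable, reflexive space with an FDD satisfying subsequential $((X^p_\alpha)^*,X^p_\alpha)$-block estimates embeds into $\widehat Z$; the space $W$ will then be obtained from $\widehat Z$ by \cite{JRZ}.

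For the construction of $\widehat Z$ I would follow the amalgamation method of \cite{OSZ2}: fix a sequence $(G_n)$ of finite-dimensional spaces that is dense, in the Banach--Mazur sense, in the class of all finite-dimensional Banach spaces, and let $\widehat Z$ be built from the $G_n$'s with an outer norm modelled on the upper estimate space $X^p_\alpha$ and the lower estimate space $(X^p_\alpha)^*$. The points to verify, each along the lines of \cite{OSZ2} and \cite{FOSZ}, are: $\widehat Z$ has an FDD; $\widehat Z$ satisfies subsequential $((X^p_\alpha)^*,X^p_\alpha)$-block estimates, since its outer norm is modelled on $X^p_\alpha$ and $(X^p_\alpha)^*$; $\widehat Z$ is reflexive, by reflexivity of $X^p_\alpha$ and finite-dimensionality of the $G_n$'s; the block estimates, together with the Szlenk-index facts recalled in Section~4, give $Sz(\widehat Z)\leq\omega^{\alpha+1}$ and $Sz(\widehat Z^*)\leq\omega^{\alpha+1}$, so that $\widehat Z\in\mathcal C_{\alpha+1}$; and, given a separable reflexive $Z$ with an FDD $(F_j)$ satisfying the block estimates, a perturbation-and-blocking argument matches a blocking of $(F_j)$ with a block sequence of the $G_n$'s inside $\widehat Z$, yielding an embedding $Z\hookrightarrow\widehat Z$.

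Finally, since $\widehat Z$ is a separable reflexive space with an FDD, \cite{JRZ} furnishes a separable reflexive space $W$ with a basis containing $\widehat Z$, with the basis shrinking and boundedly complete (consistent with reflexivity of $W$). Transitivity of embeddings and the reduction above then show $W$ is universal for $\mathcal C_\alpha$; and since $W$ is assembled from blockings of the FDD of $\widehat Z$, a direct analysis gives $Sz(W),Sz(W^*)\leq\omega^{\alpha+1}$, so $W\in\mathcal C_{\alpha+1}$, as required. The main obstacle I anticipate lies in the construction of $\widehat Z$: verifying that the $((X^p_\alpha)^*,X^p_\alpha)$-block estimates force both $Sz(\widehat Z)$ and $Sz(\widehat Z^*)$ down to $\omega^{\alpha+1}$, and carrying out the amalgamation and perturbation arguments that establish the universality in (ii); propagating the Szlenk bound through the \cite{JRZ} construction to $W$ is a secondary point that also needs care.
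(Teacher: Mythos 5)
Your proposal is correct and follows essentially the same route as the paper: Szlenk bounds give $((X^p_\alpha)^*,X^p_\alpha)$-tree estimates via Theorem \ref{name1}, the coordinatization theorem reduces everything to spaces with FDDs satisfying subsequential block estimates, a Schechtman-type universal space with outer norms modelled on $X^p_\alpha$ and $(X^p_\alpha)^*$ (Theorem \ref{main2}) provides the universal FDD space, and \cite{JRZ} supplies the basis. The two obstacles you flag are exactly the ones the paper disposes of by citation and a direct-sum trick: the bound $Sz(\widehat Z), Sz(\widehat Z^*)\leq\omega^{\alpha+1}$ follows from the $X^p_\alpha$-upper block estimates on $\widehat Z$ and $\widehat Z^*$ together with $Sz(X^p_\alpha)=\omega^{\alpha+1}$, and the bound survives the passage to a basis because $W=\widehat Z\oplus_2\bigl(\oplus_n H_n\bigr)_{\ell_2}$ with the second summand having Szlenk index at most $\omega$ and $Sz$ of a direct sum being the maximum of the summands' indices.
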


This is a strengthening of Theorem $C$ of \cite{OSZ2}, which proved the above result in the case that $\alpha=\beta\omega$ for some $\beta<\omega_1$.  

This paper was completed at Texas A\&M under the direction of Thomas Schlumprecht as part of the author's doctoral dissertation.  The author thanks Dr. Schlumprecht for his insights and direction during its completion. 

\section{Definitions and Notation}

If $Z$ is a Banach space and $E=(E_n)$ is a collection of finite-dimensional normed spaces, we say $E$ is a \emph{finite dimensional decomposition}, or \emph{FDD}, for $Z$ if for each $z\in Z$ there exists a unique sequence $(z_n)$ such that $z_n\in E_n$ and $z=\sum_{n=1}^\infty z_n$.  If $E$ is an FDD for a Banach space $Z$, for $n\in \mathbb{N}$ we denote the $n^{th}$ coordinate projection by $P^E_n$.  More precisely, for $z\in Z$, if $z=\sum_{n=1}^\infty z_n$ for $z_n\in E_n$, $P^E_nz=z_n$.  For a finite $A\subset \mathbb{N}$, we let $P_A^E=\sum_{n\in A}P^E_n$.  We define the \emph{projection constant} $K=K(E,Z)$ to be $$K=K(E,Z)=\underset{m\leq n}{\sup}\|P^E_{[m,n]}\|.$$  This is finite by the principle of uniform boundedness.  We call $E$ a \emph{bimonotone} FDD for a Banach space $Z$ if $K(E,Z)=1$.  If $Z$ has an FDD, we can always endow $Z$ with an equivalent norm which makes $E$ a bimonotone FDD.  We let $\text{supp}_E z= \{n: P_n^E z \neq 0\}$, and call this set the \emph{support} of $z$.  If $E$ is a basis, or if no confusion is possible, we write $\text{supp\ }z$ in place of $\text{supp}_E z$.  We denote by $c_{00}\Bigl(\oplus E_n\Bigr)$ the collection of vectors in $z$ with finite support.  We note that $c_{00}\Bigl(\oplus E_n\Bigr)$ is dense in any space for which $E$ is an FDD.  

We denote by $Z^{(*)}$ the closed span of $c_{00}\Bigl(\oplus E_n^*\Bigr)$ in $Z^*$ and note that $E^*=(E_n^*)$ is an FDD for $Z^{(*)}$ with $K(E^*, Z^*)\leq K(E,Z)$.  We consider $E^*_n$ with the norm it inherits as a subspace of $Z^*$ and not with the norm it inherits as the dual of $E_n$.  These norms may be different if $E$ is not bimonotone.  If $Z^{(*)}=Z^*$, we say that $E$ is a \emph{shrinking} FDD for $Z$.  We say that $E$ is a \emph{boundedly-complete} FDD if for each sequence $(z_n)$ with $z_n\in E_n$ such that $\underset{N\in \mathbb{N}}{\sup}\Bigl\|\sum_{n=1}^N z_n\Bigr\|<\infty$, $\sum_{n=1}^\infty z_n$ converges in $Z$.  A Banach space $Z$ with FDD $E$ is reflexive if and only if the FDD is both shrinking and boundedly-complete.  

If $Z$ is a Banach space with FDD $E=(E_n)$ and $V$ is a Banach space with a normalized, $1$-unconditional basis $(v_n)$, we define the space $Z^V=Z^V(E)$ to be the completion of $c_{00}\Bigl(\underset{n=1}{\overset{\infty}{\oplus}} E_n\Bigr)$ endowed with the norm $$\|z\|_{Z^V} = \sup\Bigl\{\Bigl\| \sum_{k=1}^\infty \|P^E_{[k_{n-1}, k_n)}z\|_Z v_{k_{n-1}}\Bigr\|_V: 1\leq k_0<k_1<\ldots\Bigr\}.$$  The norm above depends upon the FDD $E$, but when no confusion is possible, we will write $Z^V$ in place of $Z^V(E)$.  For convenience, we will write $Z^p$ in place of $Z^{\ell_p}$.  

If $U$ is a Banach space and $(u_n)$ is a basis for $U$, we say $(u_n)$ is $R$-right dominant if for each pair of subsequences of the natural numbers $(m_n), (k_n)$ with $m_n\leq k_n$ for all $n$, $(u_{m_n})$ is $R$-dominated by $(u_{k_n})$.  If $B=(b_n)$ is a subsequence of the natural numbers, we let $U_B=[u_{b_n}]$.  
If $Z$ is a Banach space with FDD $E$, and $U$ is a Banach space with a normalized, $1$-unconditional basis $(u_n)$, we say $E$ satisfies \emph{subsequential} $C$-$U$-\emph{upper }(respectively, \emph{lower}) \emph{block estimates} in $Z$ if each normalized block sequence $(z_n)$ is $C$-dominated by (respectively, $C$-dominates) $(u_{m_n})$, where $m_n=\min \text{supp}_E z_n$.  We say $X$ satisfies \emph{subsequential} $K$-$(U^{(*)},U)$-\emph{block estimates} in $Z$ if it satisfies $K$-$U^{(*)}$-lower block estimates in $Z$ and $K$-$U$-upper block estimates in $Z$. 

We next recall a coordinate free version of subsequential upper and lower estimates.  For $\ell\in \mathbb{N}$, we define $$T_\ell=\{(n_1,\ldots,n_\ell): n_1<\ldots<n_\ell, n_i\in \mathbb{N}\}$$

and $$T_\infty=\underset{\ell=1}{\overset{\infty}{\bigcup}}T_\ell,\text{\ \ \ \ \ \ \ } T_\infty^\text{even}=\underset{\ell=1}{\overset{\infty}{\bigcup}}T_{2\ell}.$$

An \emph{even tree} in a Banach space $X$ is a family $(x_t)_{t\in T^\text{even}_\infty}$ in $X$.  Sequences of the form $(x_{(t,k)})_{k>k_{2n-1}}$, where $n\in \mathbb{N}$ and $t=(k_1,\ldots,k_{2n-1})\in T_\infty$, are called \emph{nodes}.  A sequence of the form $(k_{2n-1},x_{(k_1,\ldots,k_{2n})})_{n=1}^\infty$, with $k_1<k_2<\ldots$, is called a \emph{branch} of the tree.  An even tree is called \emph{weakly null} if every node is a weakly null sequence.  If $X$ is a dual space, an even tree is called $w^*$ \emph{null} if every node is $w^*$ null.  If $X$ has an FDD $E=(E_n)$, a tree is called a \emph{block even tree of} $E$ if every node is a block sequence of $E$.  

If $T\subset T^\text{even}_\infty$ is closed under taking restrictions so that for each $t\in T\cup \{\varnothing\}$ and for each $m\in \mathbb{N}$ the set $\{n\in \mathbb{N}: (t,m,n)\in T\}$ is either empty or infinite, and if the latter occurs for infinitely many values of $m$, then we call $(x_t)_{t\in T}$ a \emph{full subtree}.  Such a tree can be relabeled to a family indexed by $T^\text{even}_\infty$ and the branches of $(x_t)_{t\in T}$ are branches of $(x_t)_{t\in T^\text{even}_\infty}$ and that the nodes of $(x_t)_{t\in T}$ are subsequences of the nodes of $(x_t)_{t\in T^\text{even}_\infty}$. 

Let $U$ be a Banach space with a normalized, $1$-unconditional basis $(u_n)$ and $C\geq 1$.  Let $X$ be an infinite-dimensional Banach space.  We say that $X$ \emph{satisfies subsequential} $C$-$U$-\emph{upper tree estimates} if every normalized, weakly null even tree $(x_t)_{t\in T^\text{even}_\infty}$ in $X$ has a branch $(k_{2n-1},x_{(k_1, \ldots,k_{2n})})$ such that $(x_{(k_1, \ldots, k_{2n})})_n$ is $C$-dominated by $(u_{k_{2n-1}})_n$.  We say $X$ \emph{satisfies subsequential} $C$-$U$-\emph{lower tree estimates} if every normalized, weakly null even tree $(x_t)_{t\in T^\text{even}_\infty}$ in $X$ has a branch $(k_{2n-1},x_{(k_1,\ldots,k_{2n})})$ so that $(x_{(k_1, \ldots, k_{2n})})_n$ $C$-dominates $(v_{k_{2n-1}})$.  We say that $X$ \emph{satisfies subsequential} $U$-\emph{upper (respectively lower) tree estimates} if it satisfies $C$-$U$-upper (respectively lower) tree estimates for some $C\geq 1$.  We say $X$ satisfies subsequential $C$-$(U^{(*)},U)$-tree estimates if it satisfies subsequential $C$-$U^{(*)}$-lower tree estimates and $C$-$U$-upper tree estimates.  

We let $\mathcal{A}_{(U^{(*)},U)}$ denote the class of all separable, reflexive Banach spaces which satisfy subsequential $(U^{(*)},U)$-tree estimates.

A simple perturbation argument yields the following.  

\begin{lemma} Let $U$ be a Banach space with a normalized, $1$-unconditional basis $(u_n)$ and $Z$ is a Banach space with FDD $E=(E_n)$ satisfying subsequential $C$-$U$-upper (respectively, lower) block estimates in $Z$.  Assume also that for each $n\in \mathbb{N}$, $E_n\neq \{0\}$.  If $(z_n)$ is a normalized block sequence in $Z$ and $(k_n)\subset \mathbb{N}$ is strictly increasing such that $$k_n\leq \min \emph{supp}_E z_n\leq \max \emph{supp}_E z_n<k_{n+1},$$ then $(z_n)$ is $C$-dominated by (respectively, $C$-dominates) $(u_{k_n})$. \label{perturb} \end{lemma}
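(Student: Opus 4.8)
The plan is to reduce matters to the definition of block estimates by replacing $(z_n)$ with a nearby normalized block sequence whose minimum supports are exactly the prescribed indices $(k_n)$, and then to pass to a limit.

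First I would fix finitely supported scalars $(a_n)$, say supported on $\{1,\dots,N\}$; by the definition of domination it suffices to prove $\bigl\|\sum a_n z_n\bigr\|_Z\le C\bigl\|\sum a_n u_{k_n}\bigr\|_U$ (respectively the reverse inequality) for all such scalars. Set $I=\{n:k_n<\min\operatorname{supp}_E z_n\}$; for $n\notin I$ the gap condition $k_n\le\min\operatorname{supp}_E z_n$ forces $\min\operatorname{supp}_E z_n=k_n$ already. For $n\in I$, since $E_{k_n}\neq\{0\}$ choose a norm-one vector $e_n\in E_{k_n}$, and for $\delta\in(0,1)$ put $v_n^\delta=(z_n+\delta e_n)/\|z_n+\delta e_n\|$; for $n\notin I$ put $v_n^\delta=z_n$.

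The key observations are two. First, $(v_n^\delta)_n$ is again a normalized block sequence of $E$: for $n\in I$ we have $\operatorname{supp}_E v_n^\delta=\{k_n\}\cup\operatorname{supp}_E z_n$, so $\min\operatorname{supp}_E v_n^\delta=k_n$ and $\max\operatorname{supp}_E v_n^\delta=\max\operatorname{supp}_E z_n<k_{n+1}\le\min\operatorname{supp}_E v_{n+1}^\delta$, and the same is trivially true for $n\notin I$. Thus the minimum supports of $(v_n^\delta)$ are precisely $(k_n)$, and applying the hypothesis that $E$ satisfies subsequential $C$-$U$-upper (resp. lower) block estimates gives $\bigl\|\sum a_n v_n^\delta\bigr\|_Z\le C\bigl\|\sum a_n u_{k_n}\bigr\|_U$ (resp. the reverse). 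Here one invokes the block-estimate hypothesis for a finite block sequence, which is legitimate after extending $(v_n^\delta)_{n\le N}$ to an infinite normalized block sequence of $E$ and restricting the resulting inequality. Second, $v_n^\delta\to z_n$ in norm as $\delta\to0$, since $\bigl|\,\|z_n+\delta e_n\|-1\,\bigr|\le\delta$; because the sums above are finite, $\sum a_n v_n^\delta\to\sum a_n z_n$ in $Z$, and letting $\delta\to0$ yields the desired inequality. The lower-estimate case is identical.

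As the statement indicates, this is a routine perturbation argument with no real obstacle; the only point needing a little care is arranging that the perturbed vectors have minimum support \emph{exactly} $k_n$ rather than merely at least $k_n$ — this is exactly where the hypothesis $E_n\neq\{0\}$ enters — while the gap condition $k_n\le\min\operatorname{supp}_E z_n\le\max\operatorname{supp}_E z_n<k_{n+1}$ guarantees that adjoining a vector of $E_{k_n}$ does not disturb the block structure.
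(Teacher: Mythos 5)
Your proof is correct and is precisely the ``simple perturbation argument'' the paper alludes to (the paper states the lemma without writing out a proof): perturbing $z_n$ by a small norm-one vector of $E_{k_n}$ to force $\min\operatorname{supp}_E v_n^\delta=k_n$, applying the block-estimate hypothesis, and letting $\delta\to 0$ over finitely supported scalars is exactly the intended argument, and your handling of the extension to an infinite block sequence and of the role of $E_n\neq\{0\}$ is right.
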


Another simple but technical lemma involves the preservation of upper block estimates.  We postpone the proof until Section $4$.  

\begin{lemma} Let $1\leq p\leq 2$, and let $B=(b_n),C=(c_n)$ be arbitrary subsequences of the natural numbers.  Let $U$ be a Banach space with a normalized, $1$-unconditional basis $(u_n)$.  Suppose $Z$ is a Banach space with bimonotone FDD $E$ which satisfies subsequential $U^p_B$-upper block estimates in $Z$.  Then $E$ satisfies subsequential $U^p_B$ upper block estimates in $Z^{((U^p_C)^{(*)})}$.  \label{upperest}\end{lemma}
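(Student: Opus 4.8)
The plan is to unravel the two definitions involved and reduce the claim to a pointwise norm inequality between the $Z$-norm and the $Z^{((U^p_C)^{(*)})}$-norm on block vectors. First I would recall that for $z \in c_{00}(\oplus E_n)$ the norm in $Y := Z^{((U^p_C)^{(*)})}$ is, by definition of the $Z^V$-construction applied with $V = (U^p_C)^{(*)}$, computed by taking a sup over increasing sequences $1 \le k_0 < k_1 < \cdots$ of the $V$-norm of $\sum_j \|P^E_{[k_{j-1},k_j)} z\|_Z v_{k_{j-1}}$; and that the $V$-norm of a finitely supported vector $\sum_j a_j v_{k_{j-1}}$ is in turn the sup over further increasing sequences of an $\ell_p$-weighted expression built from the $U_C$-norm. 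Because $(u_n)$ is $1$-unconditional and $1 \le p \le 2$, all of these nested suprema are monotone, so one extracts the two inequalities that will be needed: a \emph{lower} bound $\|z\|_{Z^{((U^p_C)^{(*)})}} \ge \|z\|_Z$ obtained by taking the trivial one-term decomposition (this is where bimonotonicity of $E$ in $Z$ is used, to guarantee $\|P^E_{[1,\infty)} z\|_Z = \|z\|_Z$), and conversely an \emph{upper} bound showing $\|z\|_{Z^{((U^p_C)^{(*)})}}$ is controlled by a $p$-convexity combination of pieces of $z$.

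The core step is the following. Let $(y_n)$ be a normalized block sequence of $E$ in $Y = Z^{((U^p_C)^{(*)})}$, and set $m_n = \min \operatorname{supp}_E y_n$. I must show $(y_n)$ is $C$-dominated by $(u_{b_{m_n}})$ (with the same constant $C$ witnessing the upper block estimate of $E$ in $Z$, or a fixed multiple thereof). Fix scalars $(a_n)$ with finite support and write $y = \sum_n a_n y_n$. To estimate $\|y\|_Y$ from above, take a near-optimal increasing sequence $(k_j)$ realizing the outer sup in the $Y$-norm of $y$. The intervals $[k_{j-1}, k_j)$ interact with the supports of the $y_n$; split into the intervals that meet at most one support and those that straddle a boundary between consecutive $y_n$'s. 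On each interval $I$, $P^E_I y = \sum_n a_n P^E_I y_n$ is a block vector in $Z$ supported in $[\min I, \max I]$, so by the subsequential $U^p_B$-upper block estimate of $E$ in $Z$ together with Lemma \ref{perturb}, its $Z$-norm is controlled by $C \|\sum_{n : \operatorname{supp} y_n \cap I \ne \varnothing} a_n' u_{b_{j_n}}\|_{U^p_B}$ for appropriate indices; feeding this back through the definition of $Y$ and using right-dominance of $(u_n)$ to push indices down to the $b_{m_n}$, the whole expression collapses to $C' \|\sum_n a_n u_{b_{m_n}}\|_{U^p_B}$. The boundary-straddling intervals are handled by the standard trick of splitting $y_n = y_n' + y_n''$ at the relevant coordinate and absorbing the two halves into neighbouring blocks, at the cost of a factor bounded in terms of $K(E,Z)$ and the $p$-triangle/convexity inequality; since $E$ is bimonotone this factor is absorbed cleanly.

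The main obstacle is bookkeeping the interaction between the two levels of supremum (the one defining $Y = Z^{(\cdot)}$ over the FDD and the one internal to the $U^p_C$-norm) against a \emph{third} decomposition, namely the block decomposition $(y_n)$ whose supports need not align with either family of intervals. The key technical point that makes this go through is that $1 \le p \le 2$: it lets one replace an $\ell_p$-sum of pieces coming from a refinement of intervals by the $\ell_p$-norm over the coarser intervals with no loss (since $\ell_p$-norms only decrease under coarsening when $p \le$ the relevant exponent), and it is exactly this monotonicity — combined with $1$-unconditionality and right-dominance of $(u_n)$ — that prevents the constant from degrading. I would organize the argument so that all index-shifting (from the interval endpoints, through $C$, down to $b_{m_n}$) is done once via right-dominance at the very end, keeping the constants transparent; the promised deferral of the proof to Section 4 presumably reflects that this bookkeeping is cleanest once the $p$-convexity estimates for $X^p_\alpha$ are in hand.
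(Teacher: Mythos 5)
Your overall architecture matches the paper's: fix a near-optimal blocking $(k_i)$ for the outer norm, decompose each $z_n$ into two end pieces each meeting only one interval $[k_{i-1},k_i)$ and a middle piece whose intervals meet no other $z_m$, control each interval's $Z$-norm via the upper block estimate of $E$ in $Z$, and recombine. One correction at the outset: right dominance of $(u_n)$ is not a hypothesis of this lemma and is not needed; Lemma \ref{perturb} alone handles the index matching, since $m_n\leq \min \text{supp}_E w_n\leq \max\text{supp}_E w_n<m_{n+1}$.

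The genuine gap is in the recombination step, which is precisely where $1\leq p\leq 2$ enters, and your account of its role is not the correct mechanism. Coarsening monotonicity of $\ell_p$-sums, $\sum_j\bigl(\sum_{i\in A_j}|a_i|\bigr)^p\geq\sum_i|a_i|^p$, holds for every $p\geq1$ and says nothing about the outer norm, which is taken in $(U^p_C)^{(*)}$, not in $\ell_p$. What the paper proves and uses is the chain
$$\|\cdot\|_{(U^p_B)^{(U^p_C)^{(*)}}}\leq\|\cdot\|_{(U^p_B)^{p'}}\leq\|\cdot\|_{(U^p_B)^{p}}=\|\cdot\|_{U^p_B}\leq \|\cdot\|_{(U^p_B)^{(U^p_C)^{(*)}}},$$
whose first inequality comes from duality (since $(u_{c_n})$ $1$-dominates the $\ell_p$ basis in $U^p_C$, the functionals $(u^*_{c_n})$ are $1$-dominated by the $\ell_{p'}$ basis) and whose second inequality is exactly $p\leq p'$, i.e.\ $p\leq2$. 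This identity is what collapses the nested expression $\bigl\|\sum_i\bigl\|\sum_{n\in N_i}a_nu_{b_{m_n}}\bigr\|_{U^p}u^*_{c_{k_{i-1}}}\bigr\|_{(U^p)^{(*)}}$ to $\bigl\|\sum_na_nu_{b_{m_n}}\bigr\|_{U^p}$, and it is indispensable for the middle pieces $x_n$ whose supports span many intervals: there the vectors $v_n=\sum_{i\in I_n}\|P^E_{[k_{i-1},k_i)}x_n\|_Z\,u^*_{c_{k_{i-1}}}$ form a subnormalized block sequence in $(U^p_C)^{(*)}$, so $\|\sum_n|a_n|v_n\|_{(U^p)^{(*)}}\leq\|(a_n)\|_{p'}\leq\|(a_n)\|_{p}\leq\|\sum_na_nu_{b_{m_n}}\|_{U^p}$. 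Your proposed treatment of the straddling case (splitting at boundary coordinates and absorbing halves into neighbouring blocks at a cost depending on $K(E,Z)$) does not address a single $x_n$ meeting arbitrarily many intervals and would not yield a uniform constant. Without the duality/$p\leq p'$ step the plan does not close.
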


\section{Schreier Families, Schreier and Baernstein Spaces}

Throughout, we will assume subsets of $\mathbb{N}$ are written in increasing order.  Let $[\mathbb{N}]^{<\omega}$ denote the set of all finite subsets of $\mathbb{N}$, and $[\mathbb{N}]^\omega$ the set of all infinite subsets of $\mathbb{N}$.  We write $E<F$ if $\max E< \min F$.  By convention, $\min \varnothing=\infty$, $\max \varnothing =0$.  We consider the families $[\mathbb{N}]^\omega,[\mathbb{N}]^{<\omega}$ as being ordered by extension.  That is, the predecessors of an element are its initial segments, and we write $E\preceq F$ if $E$ is an initial segment of $F$.  A family $\mathcal{F}\subset [\mathbb{N}]^{<\omega}$ is called \emph{hereditary} if, whenever $E\in \mathcal{F}$ and $F\subset E$, $F\in \mathcal{F}$.  We associate a set $F$ with the function $1_F\in \{0,1\}^\mathbb{N}$, topologized with the product topology.  We then endow $[\mathbb{N}]^{<\omega}$ with the topology induced by this association.  We note that a hereditary family is compact if and only if it contains no strictly ascending chains.  

Given two (finite or infinite) subsequences $(k_n),(\ell_n)\subset \mathbb{N}$ of the same length, we say $(\ell_n)$ is a \emph{spread} of $(k_n)$ if $k_n\leq \ell_n$ for all $n\in \mathbb{N}$.  We call a family $\mathcal{F}\subset [\mathbb{N}]^{<\omega}$ \emph{spreading} if it contains all spreads of its elements. 

We construct the \emph{Schreier families} with more specific properties than is usually done.  Let $$\mathcal{S}_0=\{\varnothing\}\cup \bigl\{\{n\}: n\in \mathbb{N}\bigr\}.$$  Assuming that for $\alpha<\omega_1$, $\mathcal{S}_\alpha$ has been defined, let $$\mathcal{S}_{\alpha+1}=\bigl\{\underset{k=1}{\overset{n}{\bigcup}}E_k: E_k\in \mathcal{S}_\alpha, E_1<\ldots<E_n, n\leq \min E_1\bigr\}.$$  Assume $\alpha<\omega_1$ is a limit ordinal.  Assume also that for each $0\leq \beta<\alpha$, $\mathcal{S}_\beta$ has been defined, and for each limit ordinal $\lambda<\alpha$, there exists a sequence $\lambda_n\uparrow \lambda$ such that $\mathcal{S}_\lambda=\{E:\exists n\leq \min E, E \in \mathcal{S}_{\lambda_n+1}\}$.  An easy induction argument shows that for any $\beta<\gamma<\alpha$ there exists a non-negative integer $m$ such that $\mathcal{S}_\beta\subset \mathcal{S}_{\gamma+m}$.  Choose some sequence $\beta_n\uparrow \alpha$.  We can recursively choose non-negative integers $m_n$ so that $$\mathcal{S}_{\beta_n+m_n+1}\subset \mathcal{S}_{\beta_{n+1}+m_{n+1}}.$$  We let $\alpha_n=\beta_n+m_n$, so $\alpha_n\uparrow \alpha$.  Therefore we have the containment $\mathcal{S}_{\alpha_n+1}\subset \mathcal{S}_{\alpha_{n+1}}$.  We let $$\mathcal{S}_\alpha=\{E: \exists n\leq \min E, E\in \mathcal{S}_{\alpha_n+1}\}.$$  

The families above depend on the choices we make of $\alpha_n\uparrow \alpha$ for limit ordinals $\alpha$, but it is known that regardless of these choices, $\mathcal{S}_\alpha$ is spreading, hereditary, and compact.

\vspace{5mm}

Next, we recall the Repeated Averages Hierarchy as defined in \cite{AMT}.  For a partially ordered set $P$, we write $MAX(P)$ to denote the collection of maximal elements.  For each $I\in [\mathbb{N}]^\omega$, $0\leq \alpha<\omega_1$, we define a sequence $(x^{\alpha, I}_n)_n$ to have the properties \begin{enumerate}[(i)] \item $(x^{\alpha, I}_n)_n$ is a convex blocking of $(e_{i_n})$, \item $I=\underset{n=1}{\overset{\infty}{\bigcup}}\text{supp\ }x^{\alpha, I}_n$, \item $\text{supp\ }x^{\alpha, I}_n\in MAX(\mathcal{S}_\alpha)$ for each $n$.  \end{enumerate}

For $I\in [\mathbb{N}]^\omega$, write $I=(i_n)$.  We let $x^{0,I}_n=e_{i_n}$.  If $(x^{\alpha,I}_n)$ has been defined to have the properties above, we let $$x^{\alpha+1,I}_1=i_1^{-1}\displaystyle\sum_{j=1}^{i_1} x^{\alpha,I}_j.$$

Suppose that $x^{\alpha+1,I}_n$ has been defined for $1\leq n <N$ to be a convex blocking of $(e_{i_n})$, $\underset{n=1}{\overset{N-1}{\bigcup}}\text{supp}(x_n^{\alpha+1,I})$ is an initial segment of $I$, $\text{supp}(x_n^{\alpha+1,I})\in \text{MAX}(S_{\alpha+1})$ for each $n$, and $x_n^{\alpha+1,I}=\frac{1}{s_n}\displaystyle\sum_{j=p_{n-1}+1}^{p_n}x_j^{\alpha,I}$ for some $0=p_0<\ldots<p_{N-1}$, where $s_n=\min \text{supp}(x_n^{\alpha+1,I})$.  Then let $s_N=\min \text{supp\ }(x_{p_{N-1}+1}^{\alpha,I})$, $p_N=p_{N-1}+s_N$, $$x_N^{\alpha+1,I}=\frac{1}{s_N}\displaystyle\sum_{j=p_{N-1}+1}^{p_N}x_j^{\alpha,I}.$$

Last, assume that for a limit ordinal $\alpha<\omega_1$, $(x^{\beta,I'}_n)$ has been defined for all $\beta<\alpha$, and all $I'\in [\mathbb{N}]^\omega$.  Let $\alpha_n\uparrow \alpha$ be the ordinals used to define $\mathcal{S}_\alpha$.  Let $m_1= \min I$ and $x^{\alpha,I}_1=x^{\alpha_{m_1}+1,I}_1$.  Given $x^{\alpha,I}_n$ for $1\leq n<N$ with the same assumptions as in the successor case, let $I_N=I\setminus \underset{n=1}{\overset{N-1}{\bigcup}}x_n^{\alpha,I}$, $m_N = \min I_N$, and $x^{\alpha,I}_N=x^{\alpha_{m_N}+1,I_N}_1$.

\vspace{5mm}
For our next lemma, we must define a convenient notation.  If $x\in c_{00}$ and $E\subset \mathbb{N}$, we let $Ex$ be the sequence defined by $Ex(n)=1_E(n)x_n$.  

\begin{lemma} If $I=(i_n)\in [\mathbb{N}]^\omega$ is such that $3i_n\leq i_{n+1}$ and $E\in S_\alpha$, then $$\Bigl\|E\Bigl(\displaystyle\sum_{n=1}^\infty x^{\alpha,I}_n\Bigr)\Bigr\|_1 \leq 2.$$

\label{c_0dom}\end{lemma}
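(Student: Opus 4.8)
The plan is to prove, by transfinite induction on $\alpha<\omega_1$, the statement $P(\alpha)$: for every $I=(i_n)\in[\mathbb{N}]^\omega$ with $3i_n\le i_{n+1}$ and every $E\in\mathcal{S}_\alpha$ one has $\|E(\sum_{n=1}^{\infty}x^{\alpha,I}_n)\|_1\le 2$. Since each $x^{\alpha,I}_n$ is a convex blocking of $(e_{i_k})$, the vector $\sum_n x^{\alpha,I}_n$ has nonnegative coordinates and the quantity to be estimated equals $\langle 1_E,\sum_n x^{\alpha,I}_n\rangle$; replacing $E$ by $E\cap I$ (still in $\mathcal{S}_\alpha$ by heredity) I may assume $E\subseteq I$, and $E\ne\varnothing$. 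I also dispose of the case $\min I=1$: if $1\in E$ then an easy induction on $\alpha$ shows $E=\{1\}$ and the estimate is trivial, while if $1\notin E$ then $x^{\alpha,I}_1=e_1$ contributes nothing and, by the self-similarity (``locality'') of the repeated averages --- the averages of $I$ beyond an initial segment of supports coincide with those of the corresponding tail of $I$ --- the problem for $(\alpha,I,E)$ reduces to the same problem for $(\alpha,I\setminus\{1\},E)$, and $I\setminus\{1\}$ again satisfies $3i_n\le i_{n+1}$ with minimum $\ge 3$. Hence I may assume $\min I\ge 2$; then every $\text{supp}\,x^{\alpha,I}_n$ has at least two elements, so $\max\text{supp}\,x^{\alpha,I}_n\ge 3\min\text{supp}\,x^{\alpha,I}_n$ and the minima of consecutive supports grow by a factor at least $3$.

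The base case $\alpha=0$ is immediate, as $\mathcal{S}_0$ consists of $\varnothing$ and singletons while $\sum_n x^{0,I}_n=\sum_n e_{i_n}$. For the successor step assume $P(\alpha)$ and write $E\in\mathcal{S}_{\alpha+1}$ as $\bigcup_{k=1}^{m}E_k$ with $E_k\in\mathcal{S}_\alpha$, $E_1<\dots<E_m$ and $m\le\min E_1=\min E$. By the recursive definition $x^{\alpha+1,I}_n=\frac{1}{s_n}\sum_{l=1}^{s_n}z^{(n)}_l$, where $z^{(n)}_1,\dots,z^{(n)}_{s_n}$ are consecutive $\alpha$-averages of $I$, their number $s_n$ equals $\min\text{supp}\,x^{\alpha+1,I}_n$, by self-similarity they are the first $s_n$ $\alpha$-averages of a tail $J$ of $I$, and the supports $\text{supp}\,x^{\alpha+1,I}_n$ partition $I$ into blocks. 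Set $\Lambda_n=\langle 1_E,\sum_{l=1}^{s_n}z^{(n)}_l\rangle$, so that $\|E(\sum_n x^{\alpha+1,I}_n)\|_1=\sum_n\Lambda_n/s_n$. I bound $\Lambda_n$ two ways: $\Lambda_n\le\sum_{l=1}^{s_n}1=s_n$ since each $z^{(n)}_l$ has mass $1$; and, using $1_E=\sum_k 1_{E_k}$ together with the fact that $\sum_l z^{(n)}_l$ is supported in block $n$, $\Lambda_n=\sum_{k}\langle 1_{E_k},\sum_l z^{(n)}_l\rangle\le\sum_k\|E_k(\sum_{l=1}^{\infty} x^{\alpha,J}_l)\|_1\le 2m$ by $P(\alpha)$ applied to the tail $J$ and the piece $E_k\in\mathcal{S}_\alpha$. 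Letting $n_0$ be the block containing $\min E$, the term $n=n_0$ gives $\Lambda_{n_0}/s_{n_0}\le 1$, while for $n>n_0$ the bound $\Lambda_n\le 2m$, the inequality $s_{n_0+1}\ge 3\max\text{supp}\,x^{\alpha+1,I}_{n_0}\ge 3\min E\ge 3m$, and the geometric growth of the $s_n$ give $\sum_{n>n_0}1/s_n\le\frac{3}{2s_{n_0+1}}\le\frac{1}{2m}$, whence $\sum_{n>n_0}\Lambda_n/s_n\le 2m\sum_{n>n_0}1/s_n\le 1$. Thus $\|E(\sum_n x^{\alpha+1,I}_n)\|_1\le 2$.

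For a limit ordinal $\lambda$ with defining sequence $\lambda_n\uparrow\lambda$, fix $E\in\mathcal{S}_\lambda$ and put $d=\min E$. From the construction one has $\mathcal{S}_{\lambda_j+1}\subseteq\mathcal{S}_{\lambda_{j+1}}\subseteq\mathcal{S}_{\lambda_{j+1}+1}$ for every $j$, so $E\in\mathcal{S}_{\lambda_d+1}$ and, more generally, $\mathcal{S}_{\lambda_d+1}\subseteq\mathcal{S}_{\lambda_j}$ whenever $j>d$. Using self-similarity I reduce to the case that $E$ meets $\text{supp}\,x^{\lambda,I}_1$; then, with $m_n=\min\text{supp}\,x^{\lambda,I}_n$, one has $d\le\max\text{supp}\,x^{\lambda,I}_1<m_n$ for all $n\ge 2$, and the $m_n$ grow geometrically. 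The first term contributes $\langle 1_E,x^{\lambda,I}_1\rangle\le 1$. For $n\ge 2$, by definition $x^{\lambda,I}_n=x^{\lambda_{m_n}+1,I_n}_1=\frac{1}{m_n}\sum_{l=1}^{m_n}x^{\lambda_{m_n},I_n}_l$ for the appropriate tail $I_n$; since $m_n>d$ we have $E\cap\text{supp}\,x^{\lambda,I}_n\in\mathcal{S}_{\lambda_d+1}\subseteq\mathcal{S}_{\lambda_{m_n}}$, and because $\lambda_{m_n}<\lambda$ the inductive hypothesis yields $\langle 1_E,\sum_{l=1}^{m_n}x^{\lambda_{m_n},I_n}_l\rangle\le 2$, so $\langle 1_E,x^{\lambda,I}_n\rangle\le 2/m_n$. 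Summing the geometric series gives $\sum_{n\ge 2}\langle 1_E,x^{\lambda,I}_n\rangle\le 3/m_2\le 1$ (as $m_2\ge 3$), and again the total is at most $2$, completing the induction.

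The facts I would invoke rather than reprove are: the self-similarity of the repeated averages, immediate from their recursive definition; the chain of inclusions among the families $\mathcal{S}_\beta$ displayed above; and the observation that any member of $\mathcal{S}_\beta$ containing $1$ equals $\{1\}$. I expect the limit step to be the main obstacle, the crucial point being to arrange matters so that for each support after the first one the trace of $E$ on it lies in a Schreier family \emph{strictly below} $\lambda$ --- which is exactly what $m_n>d$ together with $\mathcal{S}_{\lambda_d+1}\subseteq\mathcal{S}_{\lambda_{m_n}}$ provide --- so that the inductive hypothesis applies level by level; the successor step, once the two estimates for $\Lambda_n$ and the geometric growth of the support minima are in hand, is a direct summation.
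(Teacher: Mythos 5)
Your proposal is correct and follows essentially the same route as the paper: transfinite induction on $\alpha$, with the successor step decomposing $E$ into at most $m\le\min E$ pieces of $\mathcal{S}_\alpha$ and exploiting the factor-$3$ growth of the support minima to sum a geometric series, and the limit step using the containment $\mathcal{S}_{\lambda_{\min E}+1}\subseteq\mathcal{S}_{\lambda_{m_n}}$ to apply the inductive hypothesis at the lower levels $\lambda_{m_n}$. The only cosmetic differences are your explicit (and in fact unnecessary, since $s_{n+1}\ge 3\max\operatorname{supp}x^{\alpha,I}_n\ge 3s_n$ holds regardless) reduction to $\min I\ge 2$ and your packaging of the block estimate as $\Lambda_n\le 2m$ rather than the paper's $\|E_jx^{\alpha+1,I}_n\|_1\le 2/m_n$.
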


\begin{proof} By induction.  Since $S_\alpha$ is hereditary for each $\alpha$, it suffices to consider $E\subset I$.  If $\alpha=0$, the claim is clear, since $\varnothing \neq E\in S_0$ means $E$ is a singleton, and $(x_n^{0,I})=(e_{i_n})$.  

Next, assume the claim holds for the ordinal $\alpha$.  Let $E=\underset{k=1}{\overset{m}{\bigcup}}E_k\in S_{\alpha+1}$, $E_k\in S_\alpha$.  Let $m_n=\min \text{supp}(x_n^{\alpha+1,I})$.  If the set $\{n: \text{supp}(x_n^{\alpha+1,I})\cap E\neq \varnothing\}$ is empty, then the claim is trivial.  Suppose this set is non-empty, and let $N$ be its minimum.  Then $m\leq \min E\leq \frac{m_{N+1}}{3}$, and, inductively, $m\leq \frac{m_{N+n}}{3^n}$ for each $n\geq 1$.  Since there exists a sequence $0=p_0<p_1<\ldots$ with $$\text{supp}(x_n^{\alpha+1,I})=m_n^{-1}\displaystyle\sum_{j=p_{n-1}+1}^{p_n}x_j^{\alpha,I},$$
our inductive hypothesis gives that for each $j\leq m$, $$\Bigl\|E_jx_n^{\alpha+1,I}\Bigr\|_1\leq \frac{2}{m_n}.$$

Then \begin{align*} \Bigl\|E\Bigl(\displaystyle\sum_{n=1}^\infty x^{\alpha,I}_n\Bigr)\Bigr\|_1 & \leq \|x_N^{\alpha,I}\|_1+\displaystyle\sum_{n=1}^\infty\displaystyle\sum_{j=1}^m \|E_jx^{\alpha,I}_{N+n}\|_1 \\ & \leq 1+\displaystyle\sum_{n=1}^\infty \frac{2m}{m_{N+n}} \leq 1+2\displaystyle\sum_{n=1}^\infty 3^{-n} =2.\end{align*}

Last, let $\alpha<\omega_1$ be a limit ordinal and assume the claim holds for all $\beta<\alpha$.  Let $\alpha_n\uparrow \alpha$ be the ordinals used to define $\mathcal{S}_\alpha$.  If $E\in S_\alpha$, let $N=\min\{n: \text{supp}(x_n^{\alpha,I})\cap E\neq \varnothing\}$.  Let $m= \min E$, $m_n=\min \text{supp} x^{\alpha, I}_n$.  For each $n\geq 1$, $m< m_{N+n}$.  Since $E\in S_\alpha$, it follows that $E\in S_{\alpha_m+1}\subset S_{\alpha_{m_{N+n}}}$.  Since $$x^{\alpha, I}_{N+n} = x^{\alpha_{m_{N+n}}+1,I}_1 = m_{N+n}^{-1}\sum_{k=p_n}^{p_n+m_{N+n}} x^{\alpha_{m_{N+n}}, I}_k$$ for some $p_n$, the inductive hypothesis implies $$\|Ex^{\alpha,I}_{N+n}\|_1\leq 2/m_{N+n} \leq 2/3^n.$$  As in the successor ordinal case, \begin{align*} \Bigl\|E\Bigl(\displaystyle\sum_{n=1}^\infty x^{\alpha,I}_n\Bigr)\Bigr\|_1 & \leq \|x^{\alpha,I}_N\|_1+\displaystyle\sum_{n=1}^\infty \|Ex^{\alpha,I}_{N+n}\|_1 \leq  1+2\displaystyle\sum_{n=1}^\infty 3^{-n}=2.\end{align*}

\end{proof}

We last define the spaces which we will use to prove our theorems, as well as deduce some of their properties.  For $\alpha<\omega_1$, we define the norm $\|\cdot\|_\alpha$ on $c_{00}$ by $$\|x\|_\alpha= \underset{E\in S_\alpha}{\sup}\|Ex\|_1.$$ The completion of $c_{00}$ under this norm is known as the \emph{Schreier space of order} $\alpha$, and denoted $X_\alpha$.  We see that the canonical basis $(e_n)$ of $c_{00}$ becomes a normalized, $1$-unconditional basis for $X_\alpha$.  We note also that the canonical basis is shrinking in $X_\alpha$ (this follows, for example, from \cite{JO}, where it was shown that $X_\alpha$ contains no copy of $\ell_1$).  We will consider spaces of the form $X_\alpha^p = (X_\alpha)^{\ell_p}$, as defined in Section $2$.  The space $X_1^2$ was introduced by Baernstein, and the generalizations $X_1^p$ were studied by Seifert \cite{CS}.  For this reason, we refer here to $X^p_\alpha$ as the \emph{Baernstein space of order} $\alpha$ and \emph{parameter} $p$.    

We note that for $x\in c_{00}$, \begin{align*} \|x\|_{X^p_\alpha} & = \sup\Bigl\{\Bigl(\sum_j \bigl(\sum_{i\in E_j}|x_i|\bigr)^p\Bigr)^{1/p}: E_1<E_2<\ldots, E_j\in S_\alpha\Bigr\} \\ & = \sup \Bigl\{ \Bigl\|\bigl(\|E_j x\|_1\bigr)_j\Bigr\|_{\ell_p}: E_1<E_2<\ldots, E_j\in S_\alpha\Bigr\},\end{align*} with the appropriate modification to the first line if $p=\infty$.  The same is true if the suprema run over all finite sequences $E_1<\ldots <E_n, E_j\in S_\alpha$.  We collect some relevant facts about the unit vector basis $(e_n)$ of $X^p_\alpha$ in the following lemma.  

\begin{lemma} Fix $\alpha<\omega_1, 1<p<\infty$.  Then the unit vector basis $(e_n)$ of $X^p_\alpha$ is shrinking, boundedly-complete, right dominant, and satisfies subsequential $X^p_\alpha$-upper block estimates in $X^p_\alpha$.  \label{Baernsteinprop}\end{lemma}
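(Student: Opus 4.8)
The plan is to verify the four properties of the basis $(e_n)$ of $X_\alpha^p$ in turn, leaning on the formula for the norm derived above, namely $\|x\|_{X_\alpha^p} = \sup\{\|(\|E_jx\|_1)_j\|_{\ell_p}: E_1<E_2<\cdots,\ E_j\in S_\alpha\}$, and on the fact (stated earlier) that $(e_n)$ is a normalized, $1$-unconditional basis with $(e_n)$ shrinking in $X_\alpha$ since $X_\alpha$ contains no copy of $\ell_1$.

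First, for the \emph{upper block estimates}: given a normalized block sequence $(z_n)$ in $X_\alpha^p$ with $m_n=\min\operatorname{supp}z_n$, I want to show $(z_n)$ is $C$-dominated by $(e_{m_n})$, i.e.\ $\|\sum a_nz_n\|_{X_\alpha^p}\leq C\|\sum a_ne_{m_n}\|_{X_\alpha^p}$. Take an admissible sequence $E_1<E_2<\cdots$, $E_j\in S_\alpha$, realizing (up to $\varepsilon$) the norm of $\sum a_nz_n$. Each $E_j$ meets the supports of a consecutive block of the $z_n$'s; split into the at most two ``boundary'' $z_n$'s whose support straddles an endpoint of $E_j$ and the ``interior'' ones. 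Using $\|E_j z_n\|_1\le \|z_n\|_{X_\alpha^p}=1$ for the boundary terms and the spreading property of $S_\alpha$ to replace the set of relevant indices $m_n$ by an element of $S_\alpha$ (the indices $m_n$ with $z_n$ interior to $E_j$ form an increasing sequence dominated by a spread of a single $S_\alpha$-set, since $E_j\in S_\alpha$ and $m_n\le \min\operatorname{supp}z_n$), one obtains a comparison with an admissible sequence for $\sum a_ne_{m_n}$. The constant $C$ (something like $2$ or $3$) absorbs the boundary terms via the triangle inequality in $\ell_p$. This is the main obstacle: the bookkeeping of how a single $S_\alpha$-set interacts with a block sequence, and checking that the spreading/hereditary properties of $S_\alpha$ really let one pass from $\{m_n\}$ to an $S_\alpha$-admissible family; it is routine in spirit but requires care. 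Lemma~\ref{perturb} is the coordinate-free restatement of exactly this.

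Second, \emph{right dominance}: given subsequences $(m_n)\le(k_n)$ of $\mathbb{N}$, I must show $(e_{m_n})$ is dominated by $(e_{k_n})$. Given admissible $E_1<E_2<\cdots$ in $S_\alpha$ realizing $\|\sum a_ne_{m_n}\|$, note that each $E_j$ picks out a set of indices $\{n: m_n\in E_j\}$; replacing each $m_n$ by the larger $k_n$ and using that $S_\alpha$ is spreading, the image sets $\{k_n: m_n\in E_j\}$ are still (disjoint, increasing) members of $S_\alpha$, so they form an admissible sequence for $\sum a_ne_{k_n}$ with the same $\ell_p$-value of the coefficient blocks. Hence the domination constant is $1$. (Here one uses that the $k_n$ are increasing so the spread sets remain ordered $E_1'<E_2'<\cdots$.)

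Third, \emph{shrinking}: $X_\alpha^p=(X_\alpha)^{\ell_p}$ is built from $X_\alpha$, whose basis is shrinking, as an $\ell_p$-FDD-type sum with $1<p<\infty$; I would cite or quickly argue that $(X_\alpha)^{\ell_p}$ does not contain $\ell_1$ — any normalized block sequence has, by the upper block estimates just proved, an upper $\ell_p$-type estimate inherited from $X_\alpha$'s non-containment of $\ell_1$, so no block sequence spans $\ell_1$, hence the basis is shrinking. Fourth, \emph{boundedly complete}: equivalently $X_\alpha^p$ contains no copy of $c_0$; since $(e_n)$ satisfies subsequential $X_\alpha^p$-\emph{upper} estimates I instead argue for a \emph{lower} $\ell_p$-estimate on block sequences — for any normalized block $(z_n)$, testing against a single $S_\alpha$-set inside one $z_n$ and then summing shows $\|\sum a_nz_n\|\ge c\|(a_n)\|_{\ell_p}$ up to a constant — so no block sequence is equivalent to the $c_0$-basis, giving bounded completeness. (Alternatively, and more cleanly, one notes $(X_\alpha)^{\ell_p}$ with $1<p<\infty$ is reflexive because $X_\alpha$ has shrinking basis and the outer $\ell_p$ is reflexive, so the FDD is automatically both shrinking and boundedly complete by the criterion recalled in Section~2.) I expect the reflexivity route to be the shortest; the only real work is the upper block estimate.
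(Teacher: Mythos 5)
Your treatment of right dominance is exactly the paper's argument (push each norming set $E_j$ along $m_n\mapsto k_n$ and use that $\mathcal{S}_\alpha$ is spreading; constant $1$), and your parenthetical reflexivity route to shrinking and bounded completeness is also the paper's, which simply cites \cite[Lemma 8, Corollary 7]{OSZ1}. The issue is the upper block estimate, which you correctly single out as the only real work but whose sketch has two concrete gaps.

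First, the spreading property points the wrong way for the step you describe. For each ``interior'' $n$ you can choose $t_n\in E_j\cap\operatorname{supp}z_n$ with $m_n\le t_n$; then $\{t_n\}\subset E_j$ lies in $\mathcal{S}_\alpha$ by heredity and is a \emph{spread of} $\{m_n\}$, but membership in $\mathcal{S}_\alpha$ passes to spreads of members, not to sets that are merely dominated by a member (e.g.\ $\{2,3\}\in\mathcal{S}_1$ while $\{1,2\}\notin\mathcal{S}_1$). What saves the argument is the shift-by-one trick: since $m_{n_{k+1}}>\max\operatorname{supp}z_{n_k}\ge t_{n_k}$, the tail $\{m_{n_2},m_{n_3},\dots\}$ \emph{is} a spread of an initial part of $\{t_{n_k}\}$ and so lies in $\mathcal{S}_\alpha$; the discarded first index is where the factor $2$ comes from. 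This is precisely the content of \cite[Proposition 3.1]{CA}, which the paper invokes rather than reproves. Second, your bound $\|E_jz_n\|_1\le 1$ for the boundary terms is too lossy: if many sets $E_j$ sit wholly inside the support of a single $z_n$, then $z_n$ straddles both endpoints of every one of them, so it is a ``boundary'' term for all of them, and summing the termwise bound over those $j$ yields $|a_n|^p$ times an unbounded count. For such $j$ one must instead use $\sum_j\|E_jz_n\|_1^p\le\|z_n\|_{X_\alpha^p}^p=1$. The paper arranges both points by splitting each \emph{vector}, $z_n=w_n+x_n+y_n$, into a head and tail each meeting at most one $E_j$ (so the crude termwise bound is harmless) and a middle part for which each $E_j$ meets only one block (where the $\ell_p$-sum over $j$ and \cite[Proposition 3.1]{CA} apply); this per-vector bookkeeping also makes the resulting sets $F_j$ successive, so that they assemble into a single admissible family for $\sum a_ne_{m_n}$ --- a point your per-set classification does not address. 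The resulting constant in the paper is $4$.
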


\begin{proof} Since the unit vector basis of $X_\alpha$ is shrinking, it is shrinking and boundedly-complete in $X_\alpha^p$ by \cite[Lemma 8, Corollary 7]{OSZ1}.  Therefore we deduce $X^p_\alpha$ is reflexive and the coordinate functionals $(e_n^*)$ form a normalized, $1$-unconditional basis for $(X^p_\alpha)^*$.  

Take $(m_n), (k_n)$ so that $m_n\leq k_n$.  Fix $a_n\in c_{00}$.  Let $x=\sum a_n e_{m_n}, y=\sum a_n e_{k_n}$.  There exists a sequence $E_1<E_2<\ldots$ with $E_j\in S_\alpha$ for each $j$ so that $$\|x\|_{X_\alpha^p}^p=\sum_j \bigl(\sum_{i\in I_j}|a_i|\bigr)^p,$$ where $I_j = \{i:m_i\in E_j\}$.  Let $M_j = \{m_i: i\in I_j\}$.  Then $M_j\subset E_j$, and we can assume $M_j=E_j$.  Let $K_j=\{k_i:i\in I_j\}$.  Then $K_j$ is a spread of $M_j$, and thus $K_j\in S_\alpha$.  Clearly we also have $K_1<K_2<\ldots$, and $$ \|y\|_{X_\alpha^p}^p \geq \sum_j \bigl(\sum_{i\in I_j}|a_i|\bigr)^p = \|x\|_{X_\alpha^p}^p.$$  Therefore $(e_n)$ is $1$-right dominant in $X^p_\alpha$.  

Next, take $E_1<E_2<\ldots, E_j\in S_\alpha$, $(z_n)$ a normalized block sequence in $X^p_\alpha$ with $m_n = \min \text{supp\ } z_n$.  We can write $z_n = w_n+x_n+y_n$, where $(w_n), (x_n), (y_n)$ are subnormalized and such that the support of each each $w_n$ or $y_n$ intersects at most one $E_j$ and for each $j$ there exists at most one $n$ so that $E_j\cap \text{supp\ } x_n\neq \varnothing$.  Let $$J=\{j\in \mathbb{N}: E_j\cap \text{supp\ } z_n \neq \varnothing \text{\ for some\ }n\}.$$  By \cite[Proposition 3.1]{CA}, there exists a sequence of successive sets $(F_j)_{j\in J}$ so that $F_j\in S_\alpha$ for each $j\in J$ and $$\Bigl\|E_j\bigl(\sum a_nx_n\bigr)\Bigr\|_1 \leq 2\Bigl\|F_j \bigl(\sum a_n e_{m_n}\bigr)\Bigr\|_1.$$  This means \begin{align*}\Bigl(\sum_j \Bigl\|E_j\bigl(\sum_n a_nx_n\bigr)\Bigr\|_1^p\Bigr)^{1/p} & \leq 2  \Bigl(\sum_j \Bigl\|F_j\bigl(\sum_n a_ne_{m_n}\bigr)\Bigr\|_1^p\Bigr)^{1/p} \\ & \leq 2\Bigl\|\sum _{n=1}^\infty a_n e_{m_n}\Bigr\|_{X_\alpha^p}.\end{align*} 
Moreover, since for each $n$ there exists at most one $j_n$ such that $E_j\cap \text{supp\ } w_n\neq \varnothing$, and since the unit vector basis of $X^p_\alpha$ clearly $1$-dominates the unit vector basis of $\ell_p$, we deduce (the unindexed sums are taken over all $n$ such that there exists some $j$ with $E_j\cap \text{supp\ } w_n\neq \varnothing$) \begin{align*} \sum_j \Bigl\|E_j\bigl(\sum_k a_k w_k\bigr)\Bigr\|_1^p & =\sum\Bigl\| E_{j_n}\bigl(\sum_k a_k w_k\bigr)\Bigr\|_1^p = \sum |a_n|^p \|E_{j_n}w_n\|_1^p \\ & \leq \sum |a_n|^p \|w_n\|_{X_\alpha}^p \leq \sum |a_n|^p \\ & \leq \Bigl\|\sum_{n=1}^\infty a_n e_{m_n}\Bigr\|_{X_\alpha^p}^p. \end{align*}  
Similarly, we deduce $$\sum_j \Bigl\|E_j\bigl(\sum_n a_n y_n\bigr)\Bigr\|_1^p \leq \Bigl\|\sum_{n=1}^\infty a_n e_{m_n}\Bigr\|_{X^p_\alpha}^p.$$  Therefore \begin{align*} \Bigl(\sum_j \Bigl\|E_j\bigl(\sum a_n z_n\bigr)\Bigr\|_1^p\Bigr)^{1/p} & \leq \Bigl(\sum_j \Bigl\|E_j\bigl(\sum a_n w_n\bigr)\Bigr\|_1^p\Bigr)^{1/p} \\ & +\Bigl(\sum_j \Bigl\|E_j\bigl(\sum a_n x_n\bigr)\Bigr\|_1^p\Bigr)^{1/p} \\ & + \Bigl(\sum_j \Bigl\|E_j\bigl(\sum a_n y_n\bigr)\Bigr\|_1^p\Bigr)^{1/p} \\ & \leq 4\Bigl\|\sum a_n e_{m_n}\Bigr\|_{X_\alpha^p}.\end{align*}
Since $E_1<E_2<\ldots$ was arbitrary, we deduce $(e_n)$ satisfies subsequential $4$-$X^p_\alpha$-upper block estimates in $X^p_\alpha$.  

\end{proof}

We conclude this section with the following extension of \ref{c_0dom}.  

\begin{lemma} Fix $1\leq p <\infty$.  If $I=(i_n)\in [\mathbb{N}]^\omega$ is such that $i_{n+1}\geq 3i_n$, $\alpha<\omega_1$, and $(x^{\alpha, I}_n)$ is the sequence of repeated averages, then $(x^{\alpha,I}_n)$ as a sequence in $X^p_\alpha$ is $5$-equivalent to the unit vector basis of $\ell_p$.  \label{lpdom}\end{lemma}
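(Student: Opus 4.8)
The plan is to establish the two-sided equivalence separately. The lower estimate — that $(x^{\alpha,I}_n)$ $1$-dominates the unit vector basis of $\ell_p$ in $X^p_\alpha$ — should be essentially immediate: each $x^{\alpha,I}_n$ has $\|x^{\alpha,I}_n\|_{X^p_\alpha}\geq \|x^{\alpha,I}_n\|_{X_\alpha}=1$ since $\mathrm{supp}\,x^{\alpha,I}_n\in MAX(S_\alpha)$ and the entries are nonnegative and sum to $1$, so $\|S_\alpha\text{-block}\|_1 = 1$; and the blocks $x^{\alpha,I}_n$ are successive, so testing the $X^p_\alpha$-norm of $\sum a_n x^{\alpha,I}_n$ against the partition $(E_j)$ given by $E_j = \mathrm{supp}\,x^{\alpha,I}_j$ yields $\bigl\|\sum a_n x^{\alpha,I}_n\bigr\|_{X^p_\alpha}\geq \bigl(\sum_n |a_n|^p \|x^{\alpha,I}_n\|_1^p\bigr)^{1/p} = \bigl(\sum_n |a_n|^p\bigr)^{1/p}$. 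This gives the constant $1$ on this side with room to spare.

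The upper estimate is where the work lies: I must show $\bigl\|\sum a_n x^{\alpha,I}_n\bigr\|_{X^p_\alpha}\leq 5\bigl(\sum|a_n|^p\bigr)^{1/p}$. Fix successive sets $E_1<E_2<\cdots$ with $E_j\in S_\alpha$; I need to bound $\bigl(\sum_j \|E_j(\sum_n a_n x^{\alpha,I}_n)\|_1^p\bigr)^{1/p}$. The idea is to split each $E_j$ according to how it meets the supports of the blocks $x^{\alpha,I}_n$: for each $j$, at most two of the blocks $x^{\alpha,I}_n$ can be "split" by $E_j$ (the first and last one it meets), while the remaining blocks it meets are entirely contained in $E_j$. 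Dually, each block $x^{\alpha,I}_n$ is split by at most two of the $E_j$'s. So I would write, for fixed $j$, $E_j(\sum_n a_n x^{\alpha,I}_n) = (\text{part from wholly-contained blocks}) + (\text{part from }\leq 2\text{ split blocks})$. For a wholly contained block, $\|E_j \cdot a_n x^{\alpha,I}_n\|_1 = |a_n|$; for a split block, Lemma \ref{c_0dom} applied to the set $E_j$ (intersected with the relevant tail) gives $\|E_j\, a_n x^{\alpha,I}_n\|_1 \leq |a_n|\cdot\|E_j (\sum_{m\geq n} x^{\alpha,I}_m)\|_1 \leq 2|a_n|$ — here I use the hypothesis $i_{n+1}\geq 3 i_n$ that is required by Lemma \ref{c_0dom}. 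Reorganizing the double sum by which block is responsible for a given contribution, and using that each block contributes to at most two $E_j$'s as a split piece and to at most one $E_j$ as a contained piece, I get three terms each of which is a sum $\sum_n |a_n|^p$ (possibly with a factor $2^p$ from the split estimate) after a triangle-inequality split in $\ell_p$: one contained-piece term bounded by $\bigl(\sum|a_n|^p\bigr)^{1/p}$ and two split-piece terms each bounded by $2\bigl(\sum|a_n|^p\bigr)^{1/p}$. Summing gives the constant $1+2+2=5$.

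The main obstacle I anticipate is the bookkeeping in the upper estimate: carefully defining the decomposition $E_j(\sum a_n x^{\alpha,I}_n) = v_j + x'_j + y'_j$ analogous to the $w_n+x_n+y_n$ decomposition in the proof of Lemma \ref{Baernsteinprop}, verifying that the "split" pieces genuinely hit at most two indices on each side, and confirming that when I apply Lemma \ref{c_0dom} to $E_j$ I may do so — $S_\alpha$ is hereditary, so $E_j$ intersected with any tail interval is still in $S_\alpha$, and the repeated averages on a tail $I_N$ of $I$ still satisfy the growth condition $3 i_n \leq i_{n+1}$ inherited from $I$. Once that combinatorial structure is pinned down, each individual estimate is either trivial ($\|x^{\alpha,I}_n\|_1=1$) or a direct citation of Lemma \ref{c_0dom}, and the $\ell_p$ triangle inequality assembles them.
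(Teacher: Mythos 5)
Your overall architecture (trivial lower $\ell_p$-estimate; upper estimate via a decomposition of the interaction between the sets $E_j$ and the supports of the $x_n^{\alpha,I}$, with Lemma \ref{c_0dom} as the only nontrivial input) matches the paper's proof, but the accounting in the upper estimate has a genuine gap: you apply Lemma \ref{c_0dom} in the wrong place. For a single ``split'' block the trivial bound $\|E_j a_n x_n^{\alpha,I}\|_1\leq |a_n|\,\|x_n^{\alpha,I}\|_1=|a_n|$ already holds, so invoking Lemma \ref{c_0dom} there buys nothing. Where the lemma is indispensable is your \emph{contained-piece term}: if a single $E_j$ meets many blocks $n\in N_j$ interiorly, then $\bigl\|E_j\bigl(\sum_{n\in N_j}a_n x_n^{\alpha,I}\bigr)\bigr\|_1=\sum_{n\in N_j}|a_n|\,\|E_j x_n^{\alpha,I}\|_1$, and bounding each summand by $|a_n|$ gives $\sum_{n\in N_j}|a_n|$; after raising to the $p$-th power and summing over $j$, the fact that each block is contained in at most one $E_j$ only yields $\sum_j\bigl(\sum_{n\in N_j}|a_n|\bigr)^p$, which is \emph{not} controlled by $\sum_n|a_n|^p$ (take all $a_n=1$ and one $E_j$ meeting $m$ blocks: you get $m^p$ versus $m$). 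The paper closes exactly this hole with Lemma \ref{c_0dom}: $\sum_{n\in N_j}\|E_j x_n^{\alpha,I}\|_1\leq\bigl\|E_j\bigl(\sum_n x_n^{\alpha,I}\bigr)\bigr\|_1\leq 2$, whence $\bigl\|E_j\bigl(\sum_{n\in N_j}a_nx_n^{\alpha,I}\bigr)\bigr\|_1\leq 2\max_{n\in N_j}|a_n|\leq 2\bigl(\sum_{n\in N_j}|a_n|^p\bigr)^{1/p}$, and only then does the disjointness of the sets $N_j$ give the $\ell_p$ bound. This is the heart of the lemma and is where the hypothesis $i_{n+1}\geq 3i_n$ actually enters.

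A second, smaller flaw: the count ``each block is split by at most two of the $E_j$'s'' fails, because a block $x_n^{\alpha,I}$ can be the first (and last) block met by arbitrarily many consecutive $E_j$'s, namely all those $E_j$ meeting no other block. For these the right estimate is not $2|a_n|$ per set but $\sum_{j}\|E_j x_n^{\alpha,I}\|_1^p\leq\|x_n^{\alpha,I}\|_{X_\alpha^p}^p\leq 1$, straight from the definition of the $X^p_\alpha$-norm; this is the paper's ``$x_n$'' term. (Your assertion that interior blocks are entirely contained in $E_j$ is also false, since $E_j$ need not be an interval, but that is harmless.) The correct trichotomy is the one the paper uses: write $x_n^{\alpha,I}=w_n+x_n+y_n$ where $w_n$ and $y_n$ each meet only one $E_j$ (handled by Lemma \ref{c_0dom} as above, each contributing a factor $2$) and each $E_j$ meets the support of at most one $x_n$ (handled by the norm definition, contributing a factor $1$), giving $2+1+2=5$.
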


\begin{proof} Since $\|x^{\alpha, I}_n\|_1=1$ and $\text{supp\ }x^{\alpha, I}_n\in S_\alpha$, the sequence of repeated averages is a normalized block sequence in $X^p_\alpha$.  Consequently, it $1$-dominates the unit vector basis of $\ell_p$.  Fix $E_1<E_2<\ldots$, so that $E_j\in S_\alpha$ for each $j$.  Fix $(a_n)\in c_{00}$.  Let $z=\sum a_n x_n^{\alpha, I}$.  We can assume $E_j\subset I$ for each $j$ by replacing $E_j$ with $E_j\cap I$ without changing the value of $\sum\|E_jz\|_1^p$.  As before, we can decompose $x^{\alpha, I}_n = w_n+x_n+y_n$ so that $(w_n), (x_n), (y_n)$ are subnormalized block sequences, for each $n$, $\text{supp\ }w_n$ meets $E_j$ for at most one $j$, $\text{supp\ } y_n$ meets $E_j$ for at most one $j$, and for each $j$, $E_j$ meets $\text{supp\ } x_n$ for at most one $n$.  
Let $J_n = \{j: E_j\cap \text{supp\ }x_n\neq \varnothing\}$, and note that $J_1<J_2<\ldots$.  Let $x=\sum a_n x_n$.  Then $$\sum_j \|E_jx\|_1^p = \sum_{n=1}^\infty|a_n|^p \sum_{j\in J_n}\|E_jx_n\|_1^p \leq \sum_{n=1}^\infty |a_n|^p \|x_n\|_{X^p_\alpha}^p \leq \sum_{n=1}^\infty |a_n|^p.$$  
Next, let $N_j=\{n: E_j\cap \text{supp\ }w_n\neq \varnothing\}$, $w=\sum a_n w_n$.  Note that $N_1<N_2<\ldots$.  Then by Lemma \ref{c_0dom}, $$\|E_jw\|_1 \leq 2\underset{n\in N_j}{\max}|a_n| \leq 2\Bigl(\sum_{n\in N_j} |a_n|^p\Bigr)^{1/p}.$$  Therefore $$\sum_j \|E_jw\|_1^p \leq 2^p \sum_j \sum_{n\in N_j} |a_n|^p \leq 2^p\sum_n |a_n|^p.$$ 
Similarly, if $y=\sum a_ny_n$, $$\sum_j \|E_jy\|_1^p \leq 2^p \sum |a_n|^p.$$  Therefore $$\|z\|_{X_\alpha^p} \leq \|w\|_{X_\alpha^p}+\|x\|_{X_\alpha^ p}+\|y\|_{X_\alpha^p}\leq 5\Bigl(\sum |a_n|^p\Bigr)^{1/p}.$$

\end{proof}

\section{Ordinal Indices}

First, we recall the Szlenk index of a separable Banach space.  Let $X$ be a Banach space, and $K$ a weak$^*$ compact subset of $X^*$.  For $\varepsilon>0$, we define $$(K)'_\varepsilon=\Bigl\{z\in K: \text{For all\ }w^*\text{ -neighborhoods\ }U \text{\ of\ }z, \text{diam}(U\cap K)>\varepsilon\Bigr\}.$$
It is easily verified that $(K)'_\varepsilon$ is also weak$^*$ compact.  We let $$P_0(K, \varepsilon)=K$$

$$P_{\alpha+1}(K, \varepsilon)=(P_\alpha(K, \varepsilon))'_\varepsilon \text{\ \ \ }\alpha<\omega_1$$

$$P_\alpha(K, \varepsilon)=\underset{\beta<\alpha}{\bigcap}P_\beta(K, \varepsilon) \text{\ \ \ }\alpha<\omega_1,\text{\ \ }\alpha  \text{\ a limit ordinal.}$$

If there exists some $\alpha<\omega_1$ so that $P_\alpha(K, \varepsilon)=\varnothing$, we define $$\eta(K, \varepsilon)=\min \{\alpha: P_\alpha(K)=\varnothing\}.$$  Otherwise, we set $\eta(K, \varepsilon)=\omega_1$.  Then we define the Szlenk index of a Banach space $X$, denoted $Sz(X)$, to be $$Sz(X)=\underset{\varepsilon>0}{\sup}\text{\ }\eta(B_{X^*},\varepsilon).$$

The Szlenk index is one of several slicing indices.  The following two facts come from \cite{SZLENK}. \begin{enumerate}\item  For a Banach space $X$, $Sz(X)<\omega_1$ if and only if $X^*$ is separable,  \item If $X$ embeds isomorphically into $Y$, $Sz(X)\leq Sz(Y)$. \end{enumerate}
The above definition of the index is, in some cases, intractable.  A connection between weak indices and the Szlenk index has been very useful in computations.  For this, we will be concerned with a specific type of tree.  

For a Banach space $X$ and $\rho\in (0,1]$, we let $$\mathcal{H}^X_\rho=\Bigl\{(x_n)\in S_X^{<\omega}: \Bigl\|\displaystyle\sum a_nx_n\Bigr\|\geq \rho \displaystyle\sum a_n \text{\ \ \ }\forall (a_n)\subset \mathbb{R}^+\Bigr\}.$$  We will compute the Szlenk index of Baernstein spaces by combining several facts about the Szlenk index.

\begin{theorem}\cite[Theorems 3.22, 4.2]{AJO}\cite[Proposition 5]{OSZ2}
If $X$ is a Banach space such that $X^*$ is separable, there exists some ordinal $\alpha<\omega_1$ so that $Sz(X)=\omega^\alpha$. 
Moreover for any $\alpha<\omega_1$, $Sz(X)>\omega^\alpha$ if and only if there exists $\rho\in (0,1]$ and $(x_E)_{E\in \mathcal{S}_\alpha\setminus\{\varnothing\}}\subset S_X$ such that for each $E\in \mathcal{S}_\alpha\setminus MAX(\mathcal{S}_\alpha)$, $(x_{E\cup\{n\}})_{n>E}$ is weakly null and for each branch $E_1\prec E_2\prec\ldots\prec E_n$ of $\mathcal{S}_\alpha\setminus\{\varnothing\}$, $(x_{E_i})_{i=1}^n\in \mathcal{H}^X_\rho$.  \label{Alspach}
\end{theorem}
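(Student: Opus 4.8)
The statement is really two facts, and the plan is to treat them separately. That $Sz(X)$ equals $\omega^{\alpha}$ for some $\alpha<\omega_{1}$ whenever $X^{*}$ is separable is by now standard, and I would simply reproduce the argument of \cite[Proposition 5]{OSZ2}: the point is that the Szlenk derivation is sub-multiplicative in the scale parameter, so that if $P_{\omega^{\alpha}}(B_{X^{*}},\varepsilon)\neq\varnothing$ then, by a self-similar blocking, $P_{\omega^{\alpha}\cdot n}(B_{X^{*}},\varepsilon/2)\neq\varnothing$ for every $n$, forcing $Sz(X)\geq\omega^{\alpha+1}$; taking $\alpha$ to be largest with $\omega^{\alpha}\leq Sz(X)$ then yields $Sz(X)=\omega^{\alpha}$. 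The real content is the ``moreover'' clause, and the engine I would use is the dictionary between iterated weak$^{*}$ Szlenk derivations of $B_{X^{*}}$ and weakly null trees in $S_{X}$, together with the fact (from the Cantor--Bendixson analysis of the Schreier families of Section~3) that $\mathcal{S}_{\alpha}$ has Cantor--Bendixson index $\omega^{\alpha}+1$, i.e. the root $\varnothing$ has rank $\omega^{\alpha}$ in $\mathcal{S}_{\alpha}$. This last fact is exactly the bookkeeping that matches ``a tree indexed by $\mathcal{S}_{\alpha}$'' with ``the derivation survives $\omega^{\alpha}$ steps''.

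For the implication that the tree gives a large Szlenk index: given $\rho$ and $(x_{E})_{E\in\mathcal{S}_{\alpha}\setminus\{\varnothing\}}$ as stated, I would, for each $E$ whose branch in $\mathcal{S}_{\alpha}$ is $E_{1}\prec\dots\prec E_{k}=E$, use Hahn--Banach and the defining inequality of $\mathcal{H}^{X}_{\rho}$ (with near-extremal coefficients) to pick $f_{E}\in B_{X^{*}}$ with $f_{E}(x_{E_{i}})$ close to $\rho$ for every $i\leq k$. Then I would pass to a full subtree, in the sense of Section~2, arranging simultaneously that for each non-maximal $E$ the node $(f_{E\cup\{n\}})_{n>E}$ is weak$^{*}$ convergent to a functional that agrees with $f_{E}$ on all previously chosen vectors, while $\|f_{E\cup\{n\}}-f_{E}\|\geq\rho/2$ because $(f_{E\cup\{n\}}-f_{E})(x_{E\cup\{n\}})$ stays at least $\rho/2$ --- here the hypothesis that the vector node is weakly null is what forces the parent functional $f_{E}$ to become negligible on $x_{E\cup\{n\}}$. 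An induction on the Cantor--Bendixson rank in $\mathcal{S}_{\alpha}$ then shows $f_{\varnothing}\in P_{\omega^{\alpha}}(B_{X^{*}},\rho/2)$, so $Sz(X)\geq\eta(B_{X^{*}},\rho/2)>\omega^{\alpha}$.

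For the converse, suppose $P_{\omega^{\alpha}}(B_{X^{*}},\varepsilon)\neq\varnothing$ for some $\varepsilon>0$. Unwinding the definition of the iterated derivative produces a weak$^{*}$ tree of functionals in $B_{X^{*}}$ of rank exceeding $\omega^{\alpha}$ in which each node converges weak$^{*}$ to its predecessor and consecutive functionals along any branch differ in norm by at least $\varepsilon/2$; since $\mathcal{S}_{\alpha}$ has rank exactly $\omega^{\alpha}$, standard tree surgery lets me prune this to a subtree indexed by $\mathcal{S}_{\alpha}$. For each edge I would select a unit vector on which the corresponding functional difference exceeds $\varepsilon/2$, and then, by repeatedly passing to full subtrees, prune so that every vector node is weakly Cauchy (possible since $X^{*}$ separable forces $\ell_{1}\not\hookrightarrow X$), hence after the customary difference/perturbation trick weakly null, and so that along each branch all cross-terms of the form $f(x_{E_{i}})$ with $f$ a strictly later node are negligible. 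Evaluating a branch functional on $\sum a_{i}x_{E_{i}}$ then leaves the accumulated jumps, giving $\|\sum a_{i}x_{E_{i}}\|\geq\rho\sum a_{i}$ with $\rho$ a fixed fraction of $\varepsilon$, i.e. every branch lies in $\mathcal{H}^{X}_{\rho}$.

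I expect the main obstacle to be the simultaneous diagonal pruning in both directions: extracting a single full subtree on which several competing requirements hold at once --- weak$^{*}$ convergence of the functional nodes, weak nullness of the vector nodes, and uniform control of the $\mathcal{H}^{X}_{\rho}$ estimates along all branches --- while respecting the recursive combinatorics of $\mathcal{S}_{\alpha}$. The second, more mechanical, difficulty is the exact ordinal accounting: proving that a tree genuinely ``of shape $\mathcal{S}_{\alpha}$'' contributes precisely $\omega^{\alpha}$ Szlenk derivatives, and conversely that $\omega^{\alpha}$ derivations can always be re-coordinatised over $\mathcal{S}_{\alpha}$. The remaining ingredients --- the Hahn--Banach selections, the norm bookkeeping, the passage between weakly Cauchy and weakly null sequences --- I expect to be routine.
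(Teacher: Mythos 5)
This theorem is imported, not proved, in the paper: it is quoted verbatim from \cite[Theorems 3.22, 4.2]{AJO} and \cite[Proposition 5]{OSZ2}, so there is no in-paper argument to compare yours against. That said, your sketch does follow the standard route of those sources: submultiplicativity of $\eta(B_{X^*},\cdot)$ for the first clause, and the dictionary between $\ell_1^+$-weakly null trees indexed by $\mathcal{S}_\alpha$ and iterated Szlenk derivations, keyed to the fact that $\mathcal{S}_\alpha$ has Cantor--Bendixson index $\omega^\alpha+1$, for the second. Two cautions. First, the submultiplicativity argument does not give $P_{\omega^\alpha\cdot n}(B_{X^*},\varepsilon/2)\neq\varnothing$ with a fixed $\varepsilon/2$: the convexity/self-similarity iteration degrades the separation parameter with $n$ (roughly geometrically), and the intermediate claim as you state it need not hold. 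The conclusion $Sz(X)\geq\omega^{\alpha+1}$ survives only because $Sz(X)$ is a supremum over all $\varepsilon>0$, so you should phrase the step that way. Second, what you dismiss as ``mechanical'' ordinal accounting --- pruning a weak$^*$ derivation tree of rank exceeding $\omega^\alpha$ down to a tree indexed exactly by $\mathcal{S}_\alpha$, compatibly with the hereditary and spreading structure, and conversely showing that an $\mathcal{S}_\alpha$-indexed tree yields precisely $\omega^\alpha$ surviving derivations --- is the substantive combinatorial content of \cite{AJO}; it is carried out there via the fine structure of the Schreier families and a comparison with the $\ell_1^+$-weakly null index, and it is where a self-contained write-up would have to invest most of its effort. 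The Hahn--Banach selections, the weak$^*$ limits along nodes, and the Rosenthal-plus-differences trick converting weakly Cauchy vector nodes to weakly null ones are, as you say, routine.
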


With this, we can prove the following.  

\begin{proposition} For $\alpha<\omega_1$, $p\in (1,\infty)$, $Sz(X^p_\alpha)=\omega^{\alpha+1}$. \label{BaernsteinSzlenk}\end{proposition}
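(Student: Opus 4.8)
### Proof proposal

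The plan is to establish the two inequalities $Sz(X^p_\alpha)\leq \omega^{\alpha+1}$ and $Sz(X^p_\alpha)\geq \omega^{\alpha+1}$ separately, using Theorem \ref{Alspach} as the main engine. Since $X^p_\alpha$ is reflexive (Lemma \ref{Baernsteinprop}), its dual is separable, so Theorem \ref{Alspach} applies and tells us that $Sz(X^p_\alpha)=\omega^\gamma$ for some $\gamma<\omega_1$; it then suffices to show $Sz(X^p_\alpha)>\omega^\alpha$ and $Sz(X^p_\alpha)\leq\omega^{\alpha+1}$, which forces $\gamma=\alpha+1$.

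For the lower bound $Sz(X^p_\alpha)>\omega^\alpha$, I would exhibit the tree $(x_E)_{E\in\mathcal{S}_\alpha\setminus\{\varnothing\}}$ demanded by the second half of Theorem \ref{Alspach} using the repeated averages hierarchy. Concretely, along any fixed branch of $\mathcal{S}_\alpha$ one wants a weakly null ``splitting'' at each non-maximal node and a branch that lies in $\mathcal{H}^{X^p_\alpha}_\rho$ for a uniform $\rho$. Note that in $X^p_\alpha$ the unit vector basis itself satisfies $\|E x\|_1 \le \|x\|_{X^p_\alpha}$ for $E\in\mathcal S_\alpha$ (taking a single $S_\alpha$-set in the defining supremum), so for any $E\in\mathcal S_\alpha$ one has $\|\sum_{n\in E} a_n e_n\|_{X^p_\alpha}\ge \sum_{n\in E}|a_n|$ for nonnegative $a_n$; thus the normalized basis vectors $(e_n)$ indexed along a branch, restricted to an $S_\alpha$-admissible initial segment, automatically lie in $\mathcal{H}^{X^p_\alpha}_1$. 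The splitting sequences $(e_n)_n$ are weakly null since the basis is shrinking. So the tree $x_E = e_{\max E}$ (or a suitably chosen representative indexed so that branches correspond to $S_\alpha$-sets) witnesses $Sz(X^p_\alpha)>\omega^\alpha$; the bookkeeping to phrase this exactly in the branch-vs-node language of Theorem \ref{Alspach} is routine but must be done carefully.

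For the upper bound $Sz(X^p_\alpha)\leq\omega^{\alpha+1}$, I would argue by contradiction: if $Sz(X^p_\alpha)>\omega^{\alpha+1}$, then by Theorem \ref{Alspach} (applied with $\alpha+1$ in place of $\alpha$) there is $\rho\in(0,1]$ and a weakly null tree $(x_E)_{E\in\mathcal{S}_{\alpha+1}\setminus\{\varnothing\}}\subset S_{X^p_\alpha}$ with every branch in $\mathcal{H}^{X^p_\alpha}_\rho$. I would run the standard pruning/diagonalization to pass to a full subtree whose nodes are (arbitrarily small perturbations of) block sequences of the basis, then extract a branch $(x_{E_1}\prec\cdots\prec x_{E_m})$ along which the supports are sufficiently spread and ordered so that $\min\operatorname{supp} x_{E_i}$ grows like a lacunary sequence. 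The point is that such a branch, after passing through a repeated-averages-type blocking at level $\alpha$ (i.e. grouping the nodes into blocks each of which is $S_\alpha$-admissible, which is exactly what an $\mathcal S_{\alpha+1}$-set permits), will be dominated above by an $\ell_p$-like estimate via Lemma \ref{lpdom} and the subsequential $X^p_\alpha$-upper block estimates of Lemma \ref{Baernsteinprop}: a long convex average of these vectors has norm tending to $0$, contradicting the uniform lower bound $\|\sum a_n x_{E_n}\|\ge\rho\sum a_n$ coming from $\mathcal H^{X^p_\alpha}_\rho$ when tested on the flat coefficient vector over an $\mathcal S_{\alpha+1}$-admissible index set.

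The main obstacle is the upper bound, and within it the crux is converting the abstract weakly null tree into a concrete block tree and then choosing a branch whose block structure is compatible with $\mathcal{S}_{\alpha+1}$-admissibility so that Lemma \ref{lpdom} (or the upper block estimates from Lemma \ref{Baernsteinprop} together with Lemma \ref{c_0dom}) can be applied to produce convex combinations of vanishing norm. Getting the quantifiers right—uniformity of $\rho$ against the size of the admissible set, the perturbation errors from passing to block vectors, and the interaction between the tree order on $\mathcal{S}_{\alpha+1}$ and the linear order of supports—is where the real work lies; the ordinal arithmetic $\omega^\alpha,\omega^{\alpha+1}$ itself is then just read off from Theorem \ref{Alspach}.
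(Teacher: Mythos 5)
Your proposal is correct and follows essentially the same route as the paper: the lower bound via the tree $x_E=e_{\max E}$ with the single-$\mathcal{S}_\alpha$-set estimate and shrinkingness of the basis, and the upper bound by contradiction, reducing a weakly null $\mathcal{S}_{\alpha+1}$-tree to a block tree, passing to basis vectors via the subsequential $X^p_\alpha$-upper block estimates, and playing the $\ell_p$-upper bound from Lemma \ref{lpdom} on repeated averages against the $\mathcal{H}^{X^p_\alpha}_\rho$ lower bound over an $\mathcal{S}_{\alpha+1}$-maximal set. The paper carries out exactly the bookkeeping you defer (choosing $i_1$ with $5i_1^{1/p}<\tfrac{\rho}{16}i_1$ and a lacunary maximal branch), so no new ideas are needed beyond what you describe.
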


\begin{proof}

Let $(e_n)$ denote the unit vector basis of $X^p_\alpha$.  For $E\in S_\alpha\setminus\{\varnothing\}$, let $x_E=e_{\max E}$.  If $E_1, \ldots, E_n$ is a branch of $S_\alpha$, then $(x_{E_i})_{i=1}^n= (e_i)_{i\in E_n}$.  Clearly $$\Bigl\|\sum_{i=1}^n a_i x_{E_i}\Bigr\|_{X_\alpha^p} = \sum_{i\in E}a_i$$ for $a_i\geq 0$.  Since the basis is normalized and shrinking, we deduce that for $E\in S_\alpha\setminus MAX(S_\alpha)$, $(x_{E\cup \{n\}})_{n>E}=(e_n)_{n>E}$ is weakly null.  Then Theorem  \ref{Alspach} guarantees $Sz(X_\alpha^p)>\omega^\alpha$.  
We must therefore only show that $Sz(X^p_\alpha)\leq \omega^{\alpha+1}$.  Suppose not.  By Theorem \ref{Alspach}, there must exist some normalized tree $(x_E)_{E\in S_{\alpha+1}\setminus \{\varnothing\}}\subset \mathcal{H}_\rho^{X^p_\alpha}$ with $x_{E\cup \{n\}}\underset{w}{\to}0$.  By standard perturbation and pruning arguments, we can assume this tree is a block tree.  For $E\in S_{\alpha+1}\setminus \{\varnothing\}$, let $m(E)=\min \text{supp}x_E$.  Because the basis is normalized, shrinking, and satisfies subsequential $4$-$X^p_\alpha$-upper block estimates in $X^p_\alpha$, we can replace $\rho$ with $\frac{\rho}{4}$ and replace the tree $(x_E)_{E\in S_{\alpha+1}\setminus\{\varnothing\}}$ with $(e_{m(E)})_{E\in S_{\alpha+1}\setminus\{\varnothing\}}$ while maintaining the two properties mentioned above.  
Choose $i_1$ so large that $5i^{1/p}_1<\frac{\rho}{16}i_1$.  Next, choose $i_2<\ldots<i_N$ such that $i_n>3i_{n-1}$ and $m(\{i_1, \ldots, i_{n-1}\})<i_n$ for each $n=2,\ldots, N$ and $E=\{i_1, \ldots, i_N\}\in MAX(\mathcal{S}_{\alpha+1})$.  Since $\mathcal{S}_{\alpha+1}$ is compact, it can contain no strictly increasing infinite chain, so such a set must exist.  Since $i_n\leq m(\{i_1, \ldots, i_n\})<i_{n+1}$, the sequence $(e_{i_n})_{n=1}^N$ $4$-dominates $(e_{m(\{i_1, \ldots, i_n\})})_{i=1}^N$.  This follows from an application of Lemma \ref{perturb} after we recall that $(e_n)$ satisfies subsequential $4$-$X^p_\alpha$-upper block estimates in $X^p_\alpha$.  Therefore for any $a_n\geq 0$, \begin{equation} \Bigl\|\sum_{i=1}^N a_n e_{i_n}\Bigr\|_{X_\alpha^p}\geq \frac{1}{4}\Bigl\|\sum_{n=1}^N a_ne_{m(\{i_1, \ldots, i_n\})}\Bigr\|_{X_\alpha^p} \geq \frac{\rho}{16}\sum a_n.\label{1}\end{equation}  
Since $E\in S_{\alpha+1}$, $\min E=i_1$, and $E\in MAX(S_{\alpha+1})$, there exist unique $E_n\in S_\alpha$ with $E_1<\ldots<E_{i_1}$ and $E=\underset{n=1}{\overset{i_1}{\bigcup}}E_n$.  Let $I=E\cup \{3^ki_N:k\in \mathbb{N}\}$.  Then $i_{n+1}\geq 3i_n$ for each $n$.  If $(x^{\alpha, I}_n)$ is the sequence of repeated averages, then $\text{supp} x^{\alpha, I}_n=E_n$ for $1\leq n \leq i_1$.  Let $a_j$ be such that $x^{\alpha, I}_n=\sum_{j\in E_n}a_j e_{j}$.  Then $\sum_{j\in E_n}a_j=1$, so \begin{equation} \sum_{n=1}^{i_1}\sum_{j\in E_n}a_j=i_1.\label{2}\end{equation}  But by Lemma \ref{lpdom}, \begin{equation}\Bigl\|\sum_{n=1}^{i_1}\sum_{j\in E_n} a_je_{j}\Bigr\|_{X_\alpha^p} =\Bigl\|\sum_{n=1}^{i_1} x_n^{\alpha,I}\Bigr\|_{X_\alpha^p} \leq 5 \Bigl\|\sum_{n=1}^{i_1} e_n\Bigr\|_{\ell_p}=5i_1^{1/p}.\label{3}\end{equation} 
Combining $(4.1), (4.2), (4.3)$, we deduce $$5i^{1/p} \geq \Bigl\|\sum_{n=1}^{i_1}\sum_{j\in E_n} a_j e_{i_j}\Bigr\|_{X^p_\alpha} \geq \frac{\rho}{16}\sum_{n=1}^{i_1}\sum_{j\in E_n}a_j = \frac{\rho}{16}i_1.$$  But this contradicts our choice of $i_1$, and completes the proof.

\end{proof}

\section{Main Theorems}

Throughout this section, $Z^{V^{(*)}}$ will denote $Z^{(V^{(*)})}$.

\begin{proof}[Proof of Lemma \ref{upperest}]    

It is clear that $(u_{c_n})\subset U^p_C$ $1$-dominates the unit vector basis of $\ell_p$.  A simple duality argument implies that $(u_{c_n}^*)\subset (U^p_C)^{(*)}$ is $1$-dominated by the unit vector basis of $\ell_{p'}$ (or $c_0$ if $p=1$), where $p'$ denotes the conjugate exponent to $p$.  Using the assumption that $1\leq p \leq 2$, we deduce that \begin{equation} \|\cdot\|_{(U^p_B)^{(U^p_C)^{(*)}}}\leq \|\cdot\|_{(U^p_B)^{p'}} \leq \|\cdot\|_{(U^p_B)^p} = \|\cdot\|_{U^p_B} \leq \|\cdot\|_{(U^p_B)^{(U^p_C)^{(*)}}}.\label{5.1}\end{equation}  It follows that all of the above norms are equal.  

Fix a normalized block sequence $(z_n)$ in $Z^{(U^p_C)^{(*)}}$.  Let $m_n=\min \text{supp\ }z_n$.  Fix $(a_n)\in c_{00}$, $z=\sum a_nz_n$, and integers $1= k_0<k_1<\ldots$ so that $$\|z\|_{Z^{(U^p_C)^{(*)}}}=\Bigl\|\sum_{i=1}^\infty \|P^E_{[k_{i-1}, k_i)}z\|_Z u^*_{c_{k_{i-1}}}\Bigr\|_{(U^p)^{(*)}}.$$

We can write $z_n=w_n+x_n+y_n$ so that for each $n$, $w_n,x_n,y_n$ are either $0$ or projections of $z_n$ onto intervals, there is at most one $i$ so that $[k_{i-1}, k_i)\cap \text{supp\ }w_n\neq \varnothing$, the same holds for $y_n$, and for each $i$, there is at most one $n$ so that $[k_{i-1}, k_i)\cap \text{supp\ }x_n\neq \varnothing$.  Then $(w_n),(x_n),(y_n)$ are subnormalized block sequences in $Z^{(U^p_C)^{(*)}}$ by bimonotonicity, as long as we omit zero terms from the sequence.  Because $\|\cdot\|_Z\leq \|\cdot\|_{Z^{(U^p_C)^{(*)}}}$, these sequences are also subnormalized in $Z$.  Let $w=\sum a_nw_n$, and define $x,y$ similarly.  

Let $$N_i=\{n: [k_{i-1}, k_i)\cap \text{supp\ }w_n\neq \varnothing\},$$ $$I=\{i: N_i \neq \varnothing\},\text{\ \ \ } N=\underset{i\in I}{\bigcup}N_i.$$  If $I$ is empty, then $w_n=0$ for all $n$, and $w=0$.  Assume $I\neq \varnothing$.  By construction, $N_1<N_2<\ldots$.  Moreover, $(m_n)_{n\in N_i},(\min \text{supp\ }x_n)_{n\in N_i}$ are ordered so that we can apply Lemma $2.2$.  Therefore $$\Bigl\|P^E_{[k_{i-1}, k_i)}w\Bigr\|_Z \leq K\Bigl\|\sum_{n\in N_i} \|w_n\|_Za_n u_{b_{m_n}}\Bigr\|_{U^p} \leq K\Bigl\|\sum_{n\in N_i}a_nu_{b_{m_n}}\Bigr\|_{U^p},$$ where $E$ satisfies subsequential $K$-$U^p_B$-upper block estimates in $Z$.  We deduce \begin{align*} \Bigl\|\sum_{i=1}^\infty \|P^E_{[k_{i-1}, k_i)}w\|_Z u^*_{c_{k_{i-1}}}\Bigr\|_{(U^p)^{(*)}} & = \Bigl\|\sum_{i\in I} \|P^E_{[k_{i-1},k_i)}w\|_Z u^*_{c_{k_{i-1}}}\Bigr\|_{(U^p)^{(*)}} \\ & \leq K\Bigl\|\sum_{i\in I}\Bigl\|\sum_{n\in N_i} a_n u_{b_{m_n}}\Bigr\|_{U^p} u^*_{c_{k_{i-1}}}\Bigr\|_{(U^p)^{(*)}} \\ & = K\Bigl\|\sum_{n\in N} a_n u_{b_{m_n}}\Bigr\|_{U^p} \leq K\Bigl\|\sum_{n=1}^\infty a_n u_{b_{m_n}}\Bigr\|_{U^p}.\end{align*}    
The last equality follows from \eqref{5.1}.  A similar argument gives $$\Bigl\|\sum_{i=1}^\infty \|P^E_{[k_{i-1}, k_i)}y\|_Z u^*_{c_{k_{i-1}}}\Bigr\|_{(U^p)^{(*)}}\leq K\Bigl\|\sum_{n=1}^\infty a_n u_{b_{m_n}}\Bigr\|_{U^p}.$$ 

Next, let $$I_n=\{i: [k_{i-1}, k_i)\cap \text{supp\ }x_n\neq \varnothing\},$$ $$N'=\{n: I_n\neq \varnothing\}, \text{\ \ \ }I'=\underset{n\in N'}{\bigcup}I_n.$$  Note that $I_1<I_2<\ldots$.  If $N'=\varnothing$, then $x=0$.  Assume $N'\neq \varnothing$.  For $n\in N'$, let $$v_n = \sum_{i\in I_n} \|P^E_{[k_{i-1}, k_i)}x_n\|_Z u_{c_{k_{i-1}}}^*.$$  Then $$\|v_n\|_{(U^p)^{(*)}} \leq \|x_n\|_{Z^{(U^p_C)^{(*)}}} \leq 1.$$  Moreover, $$\sum_{i\in I_n}\|P^E_{[k_{i-1}, k_i)}x\|_Zu_{c_{k_{i-1}}}^* = |a_n|\sum_{i\in I_n}\|P^E_{[k_{i-1},k_i)}x_n\|_Z u^*_{c_{k_{i-1}}} = |a_n|v_n.$$  Therefore \begin{align*} \Bigl\|\sum_{i=1}^\infty \|P^E_{[k_{i-1}, k_i)}x\|_Z u^*_{c_{k_{i-1}}} \Bigr\|_{(U^p)^{(*)}} & = \Bigl\|\sum_{n\in N'}\sum_{i\in I_n} \|P^E_{[k_{i-1}, k_i)}x\|_Z u^*_{c_{k_{i-1}}}\Bigr\|_{(U^p)^{(*)}} \\ & = \Bigl\|\sum_{n\in N'} |a_n|v_n\Bigr\|_{(U^p)^{(*)}} \leq \Bigl\|\sum_{n\in N'} a_ne_n\Bigr\|_{p'} \\ & \leq \Bigl\|\sum_{n\in N'} a_ne_n\Bigr\|_p \leq \Bigl\|\sum_{n=1}^\infty a_n u_{b_{m_n}}\Bigr\|_{U^p}.\end{align*}  
We deduce \begin{align*} \|z\|_{Z^{(U^p)^{(*)}}} & =\Bigl\|\sum_{i=1}^\infty \|P^E_{[k_{i-1}, k_i)}z\|_Z u^*_{c_{k_{i-1}}} \Bigr\|_{(U^p)^{(*)}} \\ & \leq  \Bigl\|\sum_{i=1}^\infty \|P^E_{[k_{i-1}, k_i)}w\|_Z u^*_{c_{k_{i-1}}} \Bigr\|_{(U^p)^{(*)}}  \\ & +  \Bigl\|\sum_{i=1}^\infty \|P^E_{[k_{i-1}, k_i)}x\|_Z u^*_{c_{k_{i-1}}} \Bigr\|_{(U^p)^{(*)}}  \\ & + \Bigl\|\sum_{i=1}^\infty \|P^E_{[k_{i-1}, k_i)}y\|_Z u^*_{c_{k_{i-1}}} \Bigr\|_{(U^p)^{(*)}} \\ & \leq (2K+1)\Bigl\|\sum_{n=1}^\infty a_n u_{b_{m_n}}\Bigr\|_{U^p}. \end{align*}  Therefore $E$ satisfies subsequential $(2K+1)$-$U^p_B$-upper block estimates in $Z^{(U^p_C)^{(*)}}$.

\end{proof}

Our first major theorem generalizes \cite[Theorem 15]{OSZ1}.   

\begin{theorem}  Let $U$ be a Banach space with a normalized, shrinking, $1$-unconditional, right dominant basis $(u_n)$ satisfying subsequential $U^p$-upper block estimates in $U^p$, where $1<p\leq 2$.  If $X$ is a separable, reflexive Banach space which satisfies subsequential $((U^p)^*,U^p)$-tree estimates, then $X$ embeds in a reflexive Banach space $\tilde{X}$ with bimonotone FDD $E$ satisfying subsequential $((U^p)^*,U^p)$-upper block estimates.  \label{main1}\end{theorem}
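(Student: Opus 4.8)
The plan is to follow the coordinatization scheme of Odell--Schlumprecht--Zs\'ak (as in \cite{OS1,OSZ1}), adapted to the subsequential, right--dominant setting. \textbf{Step 1: reduction to a space with an FDD.} Since $X$ is separable and reflexive, it is classical that $X$ may be regarded as a subspace of a reflexive Banach space $W$ with a bimonotone FDD $F=(F_n)$; reflexivity of $W$ makes $F$ shrinking and boundedly complete, so $W^*$ carries the dual FDD $F^*$ and $X^*$ is a quotient of $W^*$. All estimates and perturbations below take place relative to $F$ and its blockings.

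\textbf{Step 2: the blocking lemmas (the main work).} Using the subsequential $C$-$U^p$-upper tree estimate for $X$, I would first prove a blocking lemma: there is a blocking $E=(E_i)$ of $F$ (each $E_i$ a finite sum of consecutive $F_n$) and a null sequence $\varepsilon_i\downarrow 0$ such that every suitably skipped normalized block sequence $(z_n)$ of $E$ with $\mathrm{dist}(z_n,X)<\varepsilon_{\min\mathrm{supp}_E z_n}$ is $C_1$-dominated \emph{in $W$} by $(u_{m_n})$, where $m_n=\min\mathrm{supp}_E z_n$ and $C_1$ depends only on $C$, $p$, and the basis constants. The proof is the familiar contradiction argument: if no blocking worked, a diagonalization over candidate blockings together with a gliding--hump extraction would produce a normalized, weakly null even tree in $X$ (using that $X$ is norm--closed and that small perturbations of vectors near $X$ remain near $X$) every branch of which fails to be $C_1$-dominated by the corresponding $(u_{k_{2n-1}})$, contradicting the tree estimate. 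Running the same argument for $X^*\subseteq W^*$ with the FDD $F^*$ and the $U^p$-upper tree estimate for $X^*$ (the dual incarnation of the $(U^p)^*$-lower tree estimate hypothesized for $X$), and then passing to a common blocking, yields the companion ``lower'' blocking lemma: skipped normalized block sequences of $E$ near $X$ also $C_1$-dominate the corresponding $(u^*_{m_n})$ in the relevant sense.

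\textbf{Step 3: the space $\tilde{X}$ and verification of the block estimates.} Let $\tilde{X}$ be the completion of $c_{00}(\oplus E_i)$ in the norm $\|z\|_{\tilde{X}}=\|z\|_{W^{U^p}(E)}$, which already dominates $\|z\|_W$. The inclusion $X\hookrightarrow\tilde{X}$ is bounded below since $\|\cdot\|_{\tilde{X}}\ge\|\cdot\|_W$; it is bounded above because, for $z\in X\cap c_{00}(\oplus E_i)$ and any partition $1=k_0<k_1<\dots$, the blocks $P^E_{[k_{i-1},k_i)}z$ form, after normalization, a skipped block sequence near $X$, so Step 2 and a telescoping estimate give $\|z\|_{W^{U^p}}\le K'\|z\|_W$; density extends the embedding to all of $X$. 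That $E$ satisfies subsequential $C_2$-$U^p$-upper block estimates in $\tilde{X}$ is then the routine splitting used in the proofs of Lemma \ref{Baernsteinprop} and Lemma \ref{upperest}: given a normalized block sequence $(z_n)$ of $E$ in $\tilde{X}$ and a partition $(k_i)$ nearly realizing $\|\sum a_nz_n\|_{W^{U^p}}$, write $z_n=w_n+x_n+y_n$ where $w_n,y_n$ each meet only one interval $[k_{i-1},k_i)$ and each interval meets at most one $x_n$; bound the interior contribution of the $x_n$ via right--dominance of $(u_n)$ and the $U^p$-upper estimate, and the $w_n,y_n$ contributions via the $1$-domination of $(u_n)$ over the $\ell_p$-basis, then sum the three pieces to get $\|\sum a_nz_n\|_{\tilde{X}}\le C_2\|\sum a_nu_{m_n}\|_{U^p}$. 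The lower $(U^p)^*$-block estimate in $\tilde{X}$ is obtained dually: $\tilde{X}^*$ is, modulo the dual-FDD identifications, of the type handled in Lemma \ref{upperest}, the dual blocking lemma supplies the corresponding upper estimate there, and Lemma \ref{upperest} guarantees that the construction does not destroy it.

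\textbf{Step 4: reflexivity, and the main obstacle.} Reflexivity of $\tilde{X}$ follows from the criterion recalled in Section 2 once both one-sided estimates are in hand: the $U^p$-upper block estimate forces $E$ to be shrinking in $\tilde{X}$ (as $U^p$ contains no copy of $\ell_1$) and the $(U^p)^*$-lower block estimate forces $E$ to be boundedly complete (as $(U^p)^*$ contains no copy of $c_0$), so the FDD is both, hence $\tilde{X}$ is reflexive. The genuine difficulty is concentrated in Step 2 --- the simultaneous bookkeeping of the tolerances $\varepsilon_i$, the skipping pattern, and the diagonalization over blockings, together with the extraction of the offending weakly null even tree; granting that, Step 3's passage from ``near $X$'' to a genuine estimate for the FDD of $\tilde{X}$ is a careful but mechanical perturbation-and-telescoping computation, and Step 4 is standard.
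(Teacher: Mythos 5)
There is a genuine gap, and it is concentrated exactly where you wave your hands: the \emph{upper} $U^p$-block estimates for the FDD of $\tilde{X}$. The construction $W^V(E)$ manufactures subsequential $V$-\emph{lower} block estimates automatically and can only \emph{preserve} upper block estimates that the ambient FDD already satisfies (that is the content of Lemma \ref{upperest}); it cannot create them. In your scheme $W$ is just some reflexive superspace of $X$ with a bimonotone FDD, and your Step 2 blocking lemma controls only skipped block sequences that are \emph{near $X$}. An arbitrary normalized block sequence of $E$ in $\tilde{X}$ need not be anywhere near $X$, so when you invoke ``the $U^p$-upper estimate'' to bound the interior pieces $x_n$ in your splitting, there is no such estimate available, and the argument collapses. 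This is precisely why the paper's proof does not attack the upper estimate directly: it first applies \cite[Proposition 4]{OSZ1} to pass to $(U^p)^*$-lower tree estimates for $X^*$, realizes $X^*$ as a quotient of a space $W=Z\oplus(U^p_{\mathbb{N}\setminus M})^*$ whose FDD satisfies $(U^p)^*$-lower block estimates via \cite[Theorem 12(b)]{OSZ1} and \cite[Lemmas 2.11, 2.13]{CA}, and then dualizes so that $X$ embeds in $W^*$, whose FDD satisfies $U^p$-upper block estimates for \emph{all} block sequences. Only then does it apply the embedding theorem \cite[Theorem 12(a)]{OSZ1} to pass to $(W^*)^{(U^p_C)^*}(H)$, picking up the lower $(U^p)^*$-estimates, with Lemma \ref{upperest} guaranteeing the upper estimates survive. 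Your proposal omits this quotient/duality stage entirely, and it is the essential idea.

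Two secondary points. First, in Step 2 you write $X^*\subseteq W^*$; since $X$ is a subspace of $W$, $X^*$ is a \emph{quotient} of $W^*$, and the blocking lemma needed on the dual side is the quotient-map version (Theorem 12(b)), which is genuinely different from the subspace version. Second, in Step 3 the norm should be $\|\cdot\|_{W^{(U^p)^*}(E)}$, not $\|\cdot\|_{W^{U^p}(E)}$: boundedness of the inclusion $X\hookrightarrow W^V(E)$ requires skipped blocks of elements of $X$ to \emph{dominate} the corresponding subsequence of the basis of $V$, and the hypothesis on $X$ supplies this only for $V=(U^p)^*$ (the lower tree estimate), not for $V=U^p$; with $V=U^p$ the required domination is strictly stronger and is not implied by the tree hypotheses.
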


\begin{proof}

Throughout the proof, we will repeatedly use \cite[Corollaries 7,9]{OSZ1} to deduce the reflexitivity of given spaces, beginning with $U^p$.  Since $X$ satisfies subsequential $U^p$-upper tree estimates, \cite[Proposition 4]{OSZ1} implies that $X^*$ satisfies subsequential $(U^p)^*$-lower tree estimates.  By \cite[Theorem 12(b)]{OSZ1}, there exists a Banach space $Y$ with bimonotone shrinking FDD $F$ and $M\in [\mathbb{N}]^\omega$ such that $X^*$ is a quotient of $Z=Y^{(U^p_M)^*}(F)$.  By \cite[Lemma 2.11]{CA}, $F$ satisfies subsequential $(U^p_M)^*$-lower block estimates in $Z$.  By \cite[Lemma 2.13]{CA}, the space $W=Z\oplus (U^p_{\mathbb{N}\setminus M})^*$ has a bimontone FDD $G$ satisfying subsequential $(U^p)^*$-lower block estimates.  Then $X^*$ is a quotient of $W$, and $W$ is reflexive.  By duality, $X$ is a subspace of $W^*$, which is reflexive with bimonotone FDD $G^*=(G_n^*)$ satisfying subsequential $U^p$-upper block estimates in $W^*$.  

By \cite[Theorem 12(a)]{OSZ1}, there exists a blocking $H$ of $G^*$ defined by $H_k = \underset{i=b_k}{\overset{b_{k+1}-1}{\oplus}} G_i^*$ for some $1=b_1<b_2<\ldots$ and $C\in [\mathbb{N}]^\omega$ so that $X\hookrightarrow (W^*)^{(U^p_C)^*}(H)$.  We deduce from the fact that $G^*$ satisfies subsequential $U^p$-upper block estimates in $W^*$ that $H$ satisfies subsequential $U_B^p$-upper block estimates in $W^*$.  Let $k_i= \max\{b_i, c_i\}$.  Since $(u_n)$ is right dominant, $H$ satisfies subsequential $U^p_K$-upper block estimates in $W^*$.  Lemma \ref{upperest} implies that $H$ satisfies subsequential $U^p_K$-upper block estimates in $(W^*)^{(U^p_C)^*}(H)$.  By \cite[Lemma 2.11]{CA}, $H$ satisfies subsequential $(U^p_C)^*$-lower block estimates in $(W^*)^{(U^p_C)^*}(H)$, and since $(u^*_n)$ is left dominant, $H$ satisfies subsequential $(U^p_K)^*$-lower block estimates in $(W^*)^{(U^p_C)^*}(H)$.  By the proof of \cite[Lemma 2]{OSZ1}, we deduce $\tilde{X} = (W^*)^{(U^p_C)^*}(H)\oplus (U^p_{\mathbb{N}\setminus K})^*$ has bimonotone FDD satisfying subsequential $((U^p)^*,U^p)$-block estimates in $\tilde{X}$.  Again, \cite[Corollaries 7,9]{OSZ1} guarantee that $\tilde{X}$ is reflexive, and this completes the proof.

\end{proof}

\begin{remark}  The statement of \cite[Lemma 2]{OSZ1} assumed the basis $(u_n)$ has a stronger property called \emph{block stability}, which is easily seen to be equivalent to the basis satisfying subsequential $U^p$-upper and $U^p$-lower block estimates in $U^p$.  One can easily check that the canonical basis of $\ell_p$ does not dominate any subsequence of $X_\alpha^p$ except in the trivial cases $p=1$ or $\alpha=0$.  Since $X^p_\alpha$ contains normalized block sequences equivalent to the canonical $\ell_p$ basis, we see the weakening of the block stability assumption is necessary.  The proof in \cite{OSZ1}, however, only uses the fact that $(u_n), (u_n^*)$ satisfy subsequential $U^p$-upper block estimates and $(U^p)^*$-lower block estimates in $U^p$ and $(U^p)^*$, respectively.  We have the upper block estimates by hypothesis and \eqref{5.1}, while the lower block estimates follow by duality.

\end{remark}

The next theorem is a generalization of \cite[Theorem 21]{OSZ1}, and an adaptation of \cite[Theorem 5.4]{CA} to our situation.  Let us recall that if $U, V$ are Banach spaces with a normalized, $1$-unconditional bases, $\mathcal{A}_{(V,U)}$ denotes the class of separable, reflexive Banach spaces satisfying subsequential $(V,U)$-tree estimates.  

\begin{theorem} Let $U$ be a Banach space with basis satisfying the hypotheses of Theorem \ref{main1}, and let $1<p\leq 2$.  Then the class $\mathcal{A}_{((U^p)^*,U^p)}$ contains a reflexive universal element with bimonotone FDD.  \label{main2}\end{theorem}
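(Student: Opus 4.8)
The goal is to manufacture a single reflexive space $Z$ with a bimonotone FDD satisfying subsequential $((U^p)^*,U^p)$-block estimates into which every member of $\mathcal A_{((U^p)^*,U^p)}$ embeds. The strategy, following \cite{OSZ1} and \cite{CA}, is a diagonalization over a countable family of ``model'' spaces. First I would fix, for the entire class $\mathcal A_{((U^p)^*,U^p)}$, a uniform constant: by a standard argument (a space with subsequential $C$-tree estimates for some $C$ can be renormed so that $C$ is controlled, and the class of constants is in fact bounded — one uses that otherwise one could build by a gliding-hump/amalgamation argument a space violating tree estimates for every constant, contradicting membership), there is a single $C_0$ so that every $X\in\mathcal A_{((U^p)^*,U^p)}$ satisfies subsequential $C_0$-$((U^p)^*,U^p)$-tree estimates. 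Alternatively, and more in the spirit of \cite{CA}, one partitions $\mathcal A_{((U^p)^*,U^p)}=\bigcup_n \mathcal A^n$ where $\mathcal A^n$ consists of spaces with the $n$-constant, builds a universal space for each $\mathcal A^n$, and takes an appropriate $U^p$-sum of these; the sum will again be reflexive with an FDD satisfying the block estimates by the same gluing lemmas (\cite[Lemma 2]{OSZ1}, \cite[Lemma 2.13]{CA}) used in the proof of Theorem \ref{main1}.

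\medskip
The core of the argument is to produce, for a fixed constant $C$, a \emph{single} reflexive space $Z_C$ with bimonotone FDD satisfying subsequential $((U^p)^*,U^p)$-block estimates such that every $X$ with subsequential $C$-$((U^p)^*,U^p)$-tree estimates embeds into $Z_C$. By Theorem \ref{main1}, each such $X$ embeds into \emph{some} reflexive $\tilde X$ with bimonotone FDD satisfying the block estimates, but $\tilde X$ depends on $X$. To remove this dependence one invokes the separability of the relevant Effros–Borel / Bossard-type coding of separable Banach spaces together with the fact that the property of satisfying subsequential $C$-tree estimates is analytic (indeed Borel) in that coding, so that one may select a \emph{sequence} $(X_j)$ in the class such that every $X$ in the class embeds isometrically into some $X_j$ — this is where the universality is really bought. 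Applying Theorem \ref{main1} to each $X_j$ gives reflexive $\tilde X_j$ with bimonotone FDD $E^j$ satisfying subsequential $C'$-$((U^p)^*,U^p)$-block estimates for a uniform $C'$ (uniform because the construction in Theorem \ref{main1} tracks the constant). Then I would form $Z_C = \bigl(\sum_j \oplus \tilde X_j\bigr)_{U^p}$, the $U^p$-sum of the $\tilde X_j$'s along their FDDs; concatenating the FDDs $E^j$ gives an FDD for $Z_C$, and an application of \cite[Lemma 2]{OSZ1} (in the weakened form noted in the Remark, needing only that $(u_n),(u_n^*)$ satisfy the one-sided block estimates in $U^p,(U^p)^*$) together with \cite[Corollaries 7,9]{OSZ1} shows $Z_C$ is reflexive with bimonotone FDD satisfying subsequential $((U^p)^*,U^p)$-block estimates. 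Each $\tilde X_j$, hence each $X_j$, hence each $X$ in the class, embeds into $Z_C$.

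\medskip
Finally I would assemble the pieces: writing $\mathcal A_{((U^p)^*,U^p)}=\bigcup_{C\in\mathbb N}\mathcal A_C$ with $\mathcal A_C$ the spaces having the $C$-constant, construct $Z_C$ as above for each $C$, and set $Z=\bigl(\sum_C \oplus Z_C\bigr)_{U^p}$. Once more \cite[Lemma 2]{OSZ1} and \cite[Corollaries 7,9]{OSZ1} (together with right-dominance of $(u_n)$ to handle the re-indexing of supports after concatenation, exactly as in the proof of Theorem \ref{main1}) give that $Z$ is reflexive, has a bimonotone FDD, and satisfies subsequential $((U^p)^*,U^p)$-block estimates; and every $X\in\mathcal A_{((U^p)^*,U^p)}$ embeds into the summand $Z_C$ for $C$ its constant, hence into $Z$. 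This $Z$ is the desired universal element. (If one only wants $Z\in\mathcal A_{((U^p)^*,U^p)}$ rather than having the block estimates, one simply forgets the FDD structure; but the stronger statement is what the theorem claims.)

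\medskip
The main obstacle is the selection of the countable subfamily $(X_j)$ that is ``embedding-cofinal'' in $\mathcal A_{((U^p)^*,U^p)}$, i.e., the descriptive-set-theoretic input that the class, suitably coded, is analytic and that one can extract such a sequence; equivalently, one must argue that the spaces $\tilde X$ produced by Theorem \ref{main1}, as $X$ ranges over the class, can be replaced by countably many. In \cite{CA} and \cite{OSZ1} this is handled by showing the construction in the analogue of Theorem \ref{main1} depends on $X$ only through countably many discrete parameters (the FDD $F$, the sets $M,C$, the blockings), all drawn from countable sets, so one can literally enumerate all possible outputs $\tilde X$ up to the relevant isomorphism; I would follow that route, which sidesteps any heavy descriptive set theory and keeps everything constructive. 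The uniformity of the constant $C'$ in Theorem \ref{main1}'s construction, and the verification that concatenating infinitely many bimonotone FDDs each with the block estimates still yields the block estimates for the $U^p$-sum (the content of \cite[Lemma 2]{OSZ1} in the form flagged in the Remark), are then routine.
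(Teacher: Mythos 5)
Your overall architecture (reduce to spaces with bimonotone FDD via Theorem \ref{main1}, then find one space containing them all) matches the paper's, but the step where, as you put it, ``the universality is really bought'' has a genuine gap, and it is exactly the step the paper handles differently. You propose to extract a countable embedding-cofinal subfamily $(X_j)$ of the class, either by citing that the class is analytic in a Bossard-type coding, or by arguing that the output $\tilde X$ of Theorem \ref{main1} depends only on countably many discrete parameters. Neither works as stated: analyticity of a class of separable Banach spaces does not by itself yield a countable cofinal subfamily under embedding --- that is the content of the Argyros--Dodos amalgamation/strong-boundedness machinery, not a formal consequence of analyticity (note that the class of all separable reflexive spaces is suitably definable yet has no universal element at all, by Szlenk's theorem); and the construction in Theorem \ref{main1} takes as input a space $Y$ with bimonotone FDD $F$ produced by \cite[Theorem 12(b)]{OSZ1}, which ranges over an uncountable family of pairwise non-isomorphic spaces, so the possible outputs $\tilde X$ cannot literally be enumerated. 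A smaller point: your opening claim that the tree-estimate constants are uniformly bounded over all of $\mathcal A_{((U^p)^*,U^p)}$ is unjustified, and also unnecessary --- the paper lets the embedding constant of $X$ into the universal space depend on $X$'s own constant $D$.

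The paper avoids any countable selection. It starts from Schechtman's single universal space $W$ with bimonotone FDD $E=(E_n)$ such that every space with bimonotone FDD embeds into $\overline{\oplus_n E_{k_n}}$ for some subsequence $(k_n)$, with the FDD carried onto $(E_{k_n})$ and with $\sum_n P^E_{k_n}$ a norm-one projection. It then renorms: setting $Y=\bigl((W^{(*)})^{(U^p)^*}\bigr)^{(*)}$ and $Z=Y^{(U^p)^*}$, and using Lemma \ref{upperest} together with \cite[Lemma 2.11]{CA} and \cite[Corollary 7, Lemma 8]{OSZ1}, one checks that $Z$ is reflexive with bimonotone FDD satisfying subsequential $((U^p)^*,U^p)$-block estimates. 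Universality then follows from the key computation that $\|\cdot\|_W$, $\|\cdot\|_Y$ and $\|\cdot\|_Z$ are \emph{equivalent on} $c_{00}\bigl(\oplus_n E_{k_n}\bigr)$ whenever $(E_{k_n})$ already satisfies subsequential $((U^p)^*,U^p)$-block estimates in $W$ --- which it does when it is the image of the FDD of some $X$ in the class, after $X$ has been replaced by $\tilde X$ via Theorem \ref{main1}. Thus the Schechtman embedding $X\to W$ survives the renorming and becomes an embedding into $Z$. This equivalence-of-norms-on-good-subsequences argument is the idea your proposal is missing; without it (or without importing the strong-boundedness theory wholesale), the diagonalization over a countable family cannot be carried out.
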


\begin{proof}

Fix constants $R,K$ so that $(u_n)$ is $R$-right dominant and satisfies subsequential $K$-$U^p$-upper block estimates in $U^p$.  

By a result of Schechtman \cite{Sch}, there exists a Banach space $W$ with bimontone FDD $E=(E_n)$ with the property that any Banach space with bimonotone FDD embeds almost isometrically into $\overline{\underset{n=1}{\overset{\infty}{\oplus}}E_{k_n}}$ for some subsequence $(k_n)$ of the natural numbers, and this subspace is $1$-complemented in $W$.  More precisely, given a Banach space with bimonotone FDD $(F_i)$ and $\varepsilon>0$, there is a subsequence $(E_{k_n})$ of $(E_n)$ and a $(1+\varepsilon)$-embedding $T:X\to W$ such that $T(F_n)=E_{k_n}$ for all $n\in \mathbb{N}$, and $\sum_{n=1}^\infty P^E_{k_n}$ is a norm-$1$ projection of $W$ onto $\overline{\underset{n=1}{\overset{\infty}{\oplus}}E_{k_n}}$.  

We next consider the space $W_0=(W^{(*)})^{(U^p)^*}$.  By \cite[Corollary 7]{OSZ1}, the sequence $(E^*_n)$ is a boundedly complete and bimonotone FDD for this space.  This means that $W_0=(W_0^{(*)})^*$ and $(E_n^{**})=(E_n)$ is a shrinking, bimonotone FDD for $W_0^{(*)}$.  Therefore $W_0$ is naturally the dual of the space $Y=W_0^{(*)}$ with bimonotone shrinking FDD $E$.  By duality and \cite[Lemma 2.11]{CA}, we deduce that $E$ satisfies subsequential $2K$-$U^p$-upper block estimates in $Y$.

Let $Z=Y^{(U^p)^*}$.  By Lemma \ref{upperest}, $E$ satisfies subsequential $4K+1$-upper block estimates in $Z$.  By \cite[Lemma 2.11]{CA}, $E$ satisfies subsequential $2K$-$U^p$-lower block estimates in $Z$.  By \cite[Corollary 7, Lemma 8]{OSZ1}, $E$ is a shrinking, boundedly-complete FDD for $Z$.  Therefore $Z\in \mathcal{A}_{((U^p)^*,U^p)}$.  We see also that $E$ is bimonotone in $Z$.  It remains to show the universality of $Z$ for $\mathcal{A}_{((U^p)^*,U^p)}$.  

Let $D\geq 1$ and assume $X$ satisfies subsequential $D$-$((U^p)^*,U^p)$-tree estimates.  By Theorem \ref{main1}, there exists a reflexive Banach space $\tilde{X}$ with bimonotone FDD $D$ satisfying subsequential $((U^p)^*,U^p)$ block estimates in $\tilde{X}$ so that $X$ embeds isomorphically into $\tilde{X}$.  Thus it suffices to assume $X$ itself has a bimonotone FDD $F$ satisfying subsequential $D_1$-$((U^p)^*,U^p)$ block estimates and show that $X$ embeds into $Z$.  We can find a subsequence $(k_n)$ of $\mathbb{N}$ and a $2$-embedding $T:X\to W$ so that $T(F_n)=E_{k_n}$ for all $n\in \mathbb{N}$ and $\sum_n P^E_{k_n}$ is a norm-$1$ projection of $W$ onto $\overline{\oplus_n E_{k_n}}$.  It follows that $(E_{k_n})$ satisfies subsequential $2D_1$-$((U^p)^*,U^p)$ estimates in $W$.  By duality, $(E^*_{k_n})$ satisfies subsequential $((U^p)^*, U^p)$ estimates in $W^{(*)}$.  We will last prove that the norms $\|\cdot\|_W, \|\cdot\|_Y, \|\cdot\|_Z$ are equivalent when restricted to $c_{00}\bigl(\oplus_n E_{k_n}\bigr)$.  

Fix $w^*\in c_{00}\bigl(\oplus_n E_{k_n}\bigr)$.  We know $\|w^*\|_{W^{(*)}}\leq \|w^*\|_{Y^*}$.  Choose $1\leq m_0<m_1<\ldots<m_N$ in $\mathbb{N}$ such that $$\|w^*\|_{Y^*} = \Bigl\|\sum_{n=1}^N \|P^{E^*}_{[m_{n-1}, m_n)}w^*\|_{W^{(*)}} u^*_{m_{n-1}}\Bigr\|_{(U^p)^*}.$$  By discarding any $m_n$ so that $P^{E^*}_{[m_{n-1},m_n)}w^*=0$, we assume $P^{E^*}_{[m_{n-1}, m_n)}w^*\neq 0$ for each $1\leq n \leq N$ without changing the sum.  There exist $j_1<\ldots<j_N$ so that $m_n>k_{j_n}= \min \text{supp}_{E^*} P^{E^*}_{[m_{n-1},m_n)}w^*\geq m_{n-1}$ for each $1\leq n \leq N$.  Since $(u_n^*)$ satisfies subsequential $K$-lower block estimates in $(U^p)^*$ and is $R$-left dominant, and since $(E^*_{k_n})$ satisfies subsequential $2D_1$-$(U^p)^*$-lower block estimates in $W^{(*)}$, we see \begin{align*} \|w^*\|_{Y^*} & \leq K\Bigl\|\sum_{n=1}^N \|P^{E^*}_{[m_{n-1}, m_n)}w^*\|_{W^{(*)}} u^*_{k_{j_{n-1}}}\Bigr\|_{(U^p)^*} \\ & \leq KR \Bigl\|\sum_{n=1}^N \|P^{E^*}_{[m_{n-1}, m_n)}w^*\|_{W^{(*)}} u^*_{j_n}\Bigr\|_{(U^p)^*} \\ & \leq 2KRD_1 \|w^*\|_{W^{(*)}}. \end{align*}

This shows $\|\cdot\|_{W^{(*)}}$ and $\|\cdot\|_{Y^*}$ are equivalent on $c_{00}\bigl(\sum_n E^*_{k_n}\bigr)$.  One easily sees that $\sum_n P^{E^*}_{k_n}$, which defines a norm-$1$ projection of $W^{(*)}$ onto $\overline{\oplus_n E^*_{k_n}}$, is also a norm-$1$ projection of $Y^*$ onto $\overline{\oplus_n E^*_{k_n}}$.  It follows that $$\frac{1}{2KRD_1}\|w\|_W\leq \|w\|_Y\leq \|w\|_W$$ for all $w\in c_{00}\bigl(\sum_n E_{k_n}\bigr)$.  

A very similar argument shows that $\|y\|_Y\leq \|y\|_Z\leq 2KRD_1\|y\|_Y$ for each $y\in c_{00}\bigl(\sum_n E_{k_n}\bigr)$.  Therefore the map $T:X\to W$ becomes an $8K^2R^2D_1^2$ embedding of $X$ into $Z$.

\end{proof}

For our next theorem, we define for an ordinal $\alpha<\omega_1$ $$\mathcal{C}_\alpha=\{X: X\text{separable, reflexive}, Sz(X), Sz(X^*)\leq \omega^\alpha\}.$$  
\begin{theorem} For any $\alpha<\omega_1$ and $p\in (1,2]$, there exists a Banach space $Z\in\mathcal{C}_{\alpha+1}$ with bimonotone FDD satisfying subsequential $((X_\alpha^p)^*, X^p_\alpha)$-block estimates such that if $X\in \mathcal{C}_\alpha$, $X$ is isomorphic to a subspace of $Z$.  Moreover, there exists $W\in \mathcal{C}_{\alpha+1}$ with a basis such that if $X\in \mathcal{C}_\alpha$, $X$ is isomorphic to a subspace of $W$. \label{main3}\end{theorem}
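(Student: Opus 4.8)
The plan is to obtain the space $Z$ directly from the coordinatization results of this section, together with Theorem \ref{name1} and Proposition \ref{BaernsteinSzlenk}, and then to pass to a space with a basis by means of the Johnson--Rosenthal--Zippin theorem \cite{JRZ}.

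First I would check that $U = X_\alpha$ satisfies the hypotheses of Theorems \ref{main1} and \ref{main2}, with $U^p = X_\alpha^p$. The unit vector basis of $X_\alpha$ is normalized, $1$-unconditional and shrinking, and it is $1$-right dominant because $\mathcal{S}_\alpha$ is spreading. Since the canonical basis of $U^p = X_\alpha^p$ is the same sequence $(e_n)$, Lemma \ref{Baernsteinprop} is precisely the assertion that it satisfies subsequential $U^p$-upper block estimates in $U^p$. Applying Theorem \ref{main2} to this $U$ yields a reflexive space $Z$ with bimonotone FDD $E$ satisfying subsequential $((X_\alpha^p)^*, X_\alpha^p)$-block estimates which is universal for $\mathcal{A}_{((X_\alpha^p)^*, X_\alpha^p)}$; and by Theorem \ref{name1}, $\mathcal{C}_\alpha \subseteq \mathcal{A}_{((X_\alpha^p)^*, X_\alpha^p)}$, so every $X \in \mathcal{C}_\alpha$ embeds isomorphically into $Z$.

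Next I would verify $Z \in \mathcal{C}_{\alpha+1}$, i.e. $Sz(Z), Sz(Z^*) \le \omega^{\alpha+1}$. The key point is that \emph{any} reflexive space with an FDD satisfying subsequential $X_\alpha^p$-upper block estimates has Szlenk index at most $\omega^{\alpha+1}$, and this is proved by rerunning the proof of Proposition \ref{BaernsteinSzlenk} with $X_\alpha^p$ there replaced by the space in question: if $Sz(Z) > \omega^{\alpha+1}$, Theorem \ref{Alspach} produces $\rho > 0$ and a weakly null tree $(x_F)_{F \in \mathcal{S}_{\alpha+1}\setminus\{\varnothing\}}$ in $S_Z$ all of whose branches lie in $\mathcal{H}^Z_\rho$; standard perturbation and pruning make it a block tree of $E$; the $X_\alpha^p$-upper block estimate transports the lower bound $\rho$ along branches into $X_\alpha^p$; and pruning to a branch whose index set is maximal in $\mathcal{S}_{\alpha+1}$ and then applying Lemmas \ref{perturb} and \ref{lpdom} to a repeated-averages vector supported on that branch gives, once the first index is chosen large enough, the same numerical contradiction as in Proposition \ref{BaernsteinSzlenk}. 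Hence $Sz(Z) \le \omega^{\alpha+1}$. Since $E$ is bimonotone and satisfies subsequential $(X_\alpha^p)^*$-lower block estimates in $Z$, the dual FDD $E^*$ satisfies subsequential $X_\alpha^p$-upper block estimates in the reflexive space $Z^*$ (by duality, cf. \cite[Lemma 2.11]{CA}: the dual of a lower estimate is an upper estimate, and $((X_\alpha^p)^*)^* = X_\alpha^p$), whence $Sz(Z^*) \le \omega^{\alpha+1}$ as well. Thus $Z \in \mathcal{C}_{\alpha+1}$, establishing the first assertion.

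For the second assertion I would apply the Johnson--Rosenthal--Zippin theorem \cite{JRZ} to the reflexive space $Z$ with its FDD $E$, obtaining a reflexive space $W$ with a basis into which $Z$ embeds. Because $W$ is built from $Z$ by inserting only controlled connecting vectors between the coordinate subspaces of $E$, the basis of $W$ still satisfies subsequential $((X_\alpha^p)^*, X_\alpha^p)$-block estimates, at worst with a larger constant, so the Szlenk computation of the previous paragraph applies to $W$ and to $W^*$ and gives $W \in \mathcal{C}_{\alpha+1}$; and $W$ is universal for $\mathcal{C}_\alpha$ since any $X \in \mathcal{C}_\alpha$ embeds into $Z$ and $Z$ embeds into $W$. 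I expect the main obstacle to be exactly this last step: one must verify with care that replacing the FDD of $Z$ by a basis of $W$ preserves both the upper and the lower block estimates --- equivalently, keeps $Sz(W)$ and $Sz(W^*)$ at or below $\omega^{\alpha+1}$ --- the bookkeeping being essentially that of \cite{OSZ2} and \cite{CA}. By comparison, the Szlenk bound in the previous paragraph, though longer to write out, is a routine adaptation of Proposition \ref{BaernsteinSzlenk}.
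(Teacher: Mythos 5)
Your construction of $Z$ and the verification of the first assertion essentially match the paper: $Z$ is the universal element of $\mathcal{A}_{((X_\alpha^p)^*, X_\alpha^p)}$ from Theorem \ref{main2}, the containment $\mathcal{C}_\alpha \subseteq \mathcal{A}_{((X_\alpha^p)^*, X_\alpha^p)}$ comes from \cite[Theorem 1.1]{CA} together with \cite[Lemma 2.7]{FOSZ} (i.e., Theorem \ref{name1}), and the bound $Sz(Z), Sz(Z^*) \leq \omega^{\alpha+1}$ follows from the subsequential $X_\alpha^p$-upper block estimates on $Z$ and $Z^*$ --- the paper simply quotes \cite[Corollary 4.5]{CA} here rather than re-running the argument of Proposition \ref{BaernsteinSzlenk}, but your sketch is essentially the proof of that corollary, so this part is fine.

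The gap is in the passage to $W$. You apply \cite{JRZ} to obtain a space with a basis containing $Z$ and then assert that this basis ``still satisfies subsequential $((X_\alpha^p)^*, X_\alpha^p)$-block estimates, at worst with a larger constant.'' That assertion is unjustified, and it is not how the paper proceeds. The relevant form of the theorem, \cite[Corollary 4.12]{JRZ}, produces finite-dimensional spaces $(H_n)$ such that $W = Z \oplus_2 D$ is reflexive and has a basis, where $D = \bigl(\oplus_n H_n\bigr)_2$; the inserted spaces $H_n$ bear no relation to $X_\alpha^p$, and no block estimates are established for the resulting basis of $W$. The paper sidesteps the issue entirely: since $(H_n)$ satisfies $\ell_2$-upper block estimates in $D$, one has $Sz(D) \leq \omega$ by \cite[Theorem 3]{OS}, and then the direct-sum formula $Sz(W) = \max\{Sz(Z), Sz(D)\}$ of \cite[Proposition 14]{OSZ2}, applied also to $W^* = Z^* \oplus_2 D^*$, gives $W \in \mathcal{C}_{\alpha+1}$ with no block-estimate bookkeeping on the basis of $W$ at all. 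Universality of $W$ then follows as you say, since every $X \in \mathcal{C}_\alpha$ embeds into $Z$ and $Z$ embeds into $W$. You correctly flagged this step as the main obstacle; the repair is to bound $Sz(W)$ and $Sz(W^*)$ via the Szlenk index of a direct sum rather than via block estimates on the basis of $W$.
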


\begin{proof}

Let $Z$ be the universal element of $\mathcal{A}_{((X^p_\alpha)^*, X^p_\alpha)}$ guaranteed by Theorem \ref{main2}.  Then $Z, Z^*$ satisfy subsequential $X^p_\alpha$-upper block estimates, and $Sz(Z), Sz(Z^*)\leq Sz(X^p_\alpha) = \omega^{\alpha+1}$ by \cite[Corollary 4.5]{CA}.  Therefore $Z\in \mathcal{C}_{\alpha+1}$.  By \cite[Corollary 4.12]{JRZ}, there exists a sequence of finite dimensional spaces $(H_n)$ so that if $D=\Bigl(\underset{n=1}{\overset{\infty}{\oplus}} H_n\Bigr)_2$, then $W=Z\oplus_2 D$ is reflexive and has a basis.  Since the FDD $(H_n)$ satisfies $\ell_2$-upper block estimates in $D$, $Sz(D)\leq \omega$ \cite[Theorem 3]{OS}.  By \cite[Proposition 14]{OSZ2}, $$Sz(W)= \max \{Sz(Z), Sz(D)\}\leq \omega^{\alpha+1}.$$  By the same reasononing, $Sz(W^*)=Sz(Z^*\oplus_2 D^*)\leq \omega^{\alpha+1}$.  Therefore $W\in \mathcal{C}_{\alpha+1}$.  

If $X\in \mathcal{C}_\alpha$, \cite[Theorem 1.1]{CA} implies that $X,X^*$ both satisfy subsequential $X_\alpha$-upper tree estimates, and therefore also satisfy $X_\alpha^p$-upper tree estimates.  Then by \cite[Lemma 2.7]{FOSZ}, $X$ satisfies subsequential $((X_\alpha^p)^*,X_\alpha^p)$-tree estimates.  By universality of $Z$, $X$ embeds isomorphically in $Z$, and therefore $X$ embeds isomorphically into $W$.

\end{proof}

\end{document}